\tikzset{every path/.style={thick}}
\numberwithin{equation}{section}
\theoremstyle{plain}
\newtheorem{thm}{Theorem}[section]
\newtheorem{prop}[thm]{Proposition}
\newtheorem{lem}[thm]{Lemma}
\newtheorem{cor}[thm]{Corollary}
\theoremstyle{definition}
\newtheorem{defi}[thm]{Definition}
\newtheorem{ex}[thm]{Example}
\newtheorem{question}{Question}
\theoremstyle{remark}
\newcommand{\R}{\mathbb{R}}
\newcommand{\dotcup}{\hspace{.22em}\ensuremath{\mathaccent\cdot\cup}\hspace{.22em}}
\newcommand{\conv}{\textup{conv}}
\newcommand{\cone}{\textup{cone}}
\newcommand{\CutP}{\textsc{Cut}^\square}
\newcommand{\CutC}{\textsc{Cut}}
\newcommand{\mypar}{\subsection*}
\newcommand{\cmark}{\ding{51}}
\newcommand{\xmark}{\ding{55}}
\newcommand{\MC}{\textsc{MaxCut}\xspace}
\title{Cut Polytopes of Minor-free Graphs}
\author[M. Chimani]{Markus Chimani}
\author[M. Juhnke-Kubitzke]{Martina Juhnke-Kubitzke}
\author[A. Nover]{Alexander Nover}
\author[T. Römer]{Tim Römer}
\address{School of Mathematics/Computer Science, University of Osnabrück, Germany}
\email{\{markus.chimani,juhnke-kubitzke,anover,troemer\}@uni-osnabrueck.de}
\date{}
\begin{document}

\begin{abstract}
The cut polytope of a graph $G$ is the convex hull of the indicator vectors of all cuts in $G$ and is closely related to the \MC problem. We give the facet-description of cut polytopes of $K_{3,3}$-minor-free graphs and introduce an algorithm solving \MC on those graphs, which only requires the running time of planar \MC. Moreover, starting a systematic geometric study of cut polytopes, we classify graphs admitting a simple or simplicial cut polytope.
\end{abstract}

\maketitle

\section{Introduction}\label{Intro}
The problem of finding a maximum cut in a weighted graph, called \MC problem, is well-known in combinatorial optimization, and one of Karp's original 21 NP-complete problems~\cite{karp}.
The research on \MC is driven by a variety of applications ranging from mathematical problems like $\ell^1$-embeddability \cite{ApplicationsCutPolyhedra1} over quantum mechanics \cite{SpinGlas,ApplicationsCutPolyhedra2} to design of electronic circuits \cite{CircuitLayout}.
An overview of applications is given in \cite{ApplicationsCutPolyhedra1,ApplicationsCutPolyhedra2}.

Formally, considering a graph $G=(V,E)$ with edge weights $c_e$, \MC is the problem of finding a node subset $S\subseteq V$ that 
maximizes $\sum_{e \in \delta(S)}c_e$, where $\delta(S)=\{e \in E: |e \cap S|=1\}$. 
The \emph{cut polytope} $\CutP(G)$ is defined as the convex hull of the indicator vectors of \emph{cuts} $\delta(S)$, for all $S\subseteq V$, given by 
$$x^{\delta(S)}_e= \begin{cases}	1, &\text{ if } e \in \delta(S);\\
							0, &\text{ else}.			\end{cases}$$

Although \MC is NP-complete on general graphs, there are some classes of graphs on which polynomial algorithms are known. In \cite{AlgorithmPlanar1,AlgorithmPlanar2} it was shown that \MC can be solved in polynomial time for unweighted planar graphs. This result can be extended to the weighted case \cite{MaxCutPlanar2,MaxCutPlanar1}.

By Kuratowski's Theorem \cite{kuratowski}, a graph is planar if and only if it contains no $K_5$- or $K_{3,3}$-subdivision. As an extension of this, Wagner \cite{WagnerPlanar} proved that a graph is planar if and only if it contains no $K_5$- or $K_{3,3}$-minor.

Using Wagner's result, Barahona  \cite{graphsnotcontractible} introduced a polynomial-time algorithm solving \MC on $K_5$-minor-free graphs in $\mathcal{O}(n^4)$ time. This was generalized by Kaminski \cite{cutsandcontainment} by proving that \MC can be solved in $\mathcal{O}(n^4)$ time on \mbox{$H$-minor-free} graphs, for an arbitrary graph $H$ that admits a drawing with exactly one crossing. 
An extension of the class of $K_5$-minor-free graphs was given by Grötschel and Pulleyblank by introducing \emph{weakly bipartite} graphs \cite{WeaklyBipartite,WeaklyBipartiteK5}. By definition these are the graphs, whose bipartite subgraph polytope is completely described by certain cycle- and edge-inequalities (see \Cref{Preliminaries}).
Moreover, they proved that for positive edge-weights, \MC can be solved in polynomial time one these graphs by using linear programming.
In contrast to these results, \MC is NP-complete on $K_6$-minor-free graphs \cite{graphsnotcontractible}.

Considering cut polytopes, it is particularly interesting to find their \emph{linear description}, i.e., their facet-defining inequalities. If there is a linear description of polynomial size in the input, this gives a polynomial algorithm for \MC. Even though it is unlikely to find such a description for arbitrary graphs, a better understanding of cut polytopes is expected to improve algorithmic results.

Although cut polytopes of complete graphs have been intensively studied (see, e.g., \cite{geometryofcuts}), we are far from a good understanding of these objects, especially for $K_n$, $n \geq 9$.
Even much less is known for cut polytopes of arbitrary graphs.
The latter were considered, e.g., by Barahona and Mahjoub \cite{onthecutpolytope}. As an additional result to the polynomial algorithm on $K_5$-minor-free graphs, they determined all facets of cut polytopes of those graphs.

Not too long ago, Sturmfels and Sullivant \cite{ToricGeometyCutsAndSplits} established a new connection between the study of cut polytopes and commutative algebra, as well as algebraic geometry, by considering related toric varieties. In particular, they conjectured that the cut polytope of a graph is normal if and only if the graph is $K_5$-minor-free.
Among others, the research on these toric varieties and associated cut algebras has been pursued by Engström \cite{CutIdealsK4MinorFree}, Ohsugi \cite{NormalityOfCutPolytopes,GorensteinCutPolytopes}, and Römer and Saeedi Madani \cite{RetractsCutAlgebra}.

It turns out that not much is known about the polyhedral structure of cut polytopes as objects in discrete geometry. We expect new insights in the study of \MC by considering cut polytopes of graphs not containing a specific minor.

\mypar{Our contribution and organization of this paper}
In \Cref{Preliminaries}, we recall basic definitions on graphs and polytopes, and summarize known results on cut polytopes.

In \Cref{K33-minor-free Graphs}, we consider $K_{3,3}$-minor-free graphs. Complementing the results on $K_5$-minor-free graphs, we provide the full linear description of cut polytopes of $K_{3,3}$-minor-free graphs.

Moreover, we give an algorithm solving \MC on $K_{3,3}$-minor-free graphs, requiring only the running time for \MC on planar graphs. This is somewhat surprising, as $K_5$-minor-free graphs admit an easier linear description, while we achieve a better running time for \MC on $K_{3,3}$-minor-free graphs.

Starting the investigation of geometric properties of cut polytopes, in \Cref{Simple and Simplicial Cut Polytopes} we completely characterize graphs that provide a simple or simplicial cut polytope. In particular, it turns out that graphs providing a simple cut polytope are precisely the $C_4$-minor-free graphs. The simplicial case can only occur for finitely many graphs. 

\section{Preliminaries}\label{Preliminaries}
In this section we provide some basic background on graphs and polytopes. Then, we recapitulate some known results on cut polytopes. For notation and results related to graphs we refer to \cite{diestel}, for those related to polytopes to \cite{BrunsGubeladze,ziegler}.

\mypar{Graphs}
We only consider undirected graphs. A graph is \emph{simple}, if it does neither have parallel edges connecting the same two nodes, nor self-loops.
Unless specified otherwise, we only consider simple graphs that contain no isolated nodes in the following. For $k \in \mathbb{N}$, let $[k]=\{1, \dots , k\}$.
Given a graph $G=(V,E)$ we also write $V(G)$ and $E(G)$ for its set of nodes $V$ and its set of edges $E$, respectively.
For $v,w \in V(G)$, let $vw=\{v,w\}$ be the edge between $v$ and $w$. Two nodes $v$ and $w$ are \emph{adjacent} if $vw \in E(G)$. 

A \emph{path} of length $k$ is a sequence of edges $e_1, \dots, e_k$ with $e_i =v_{i-1} v_i$ such that $v_i \neq v_j$ for $0 \leq i < j \leq k$.
Such a sequence with $v_0 = v_k$ is a \emph{cycle}; a cycle of length 3 is a \emph{triangle}.
A graph $H$ is a \emph{subgraph} of $G$, denoted by $H\subseteq G$, if $G$ contains (a subgraph isomorphic to) $H$.
Given a subset $W \subseteq V$, the subgraph \emph{induced} by $W$ is the graph $G[W]=(W, \{uv \in E: u,v \in W \})$.
If an induced subgraph forms a cycle, this is an \emph{induced cycle} and thus \emph{chordless}.
A graph $G$ is \emph{chordal}, if every induced cycle in $G$ has length~$3$.
Maximal planar graphs are \emph{triangulations}.
We fix the following notations for some special classes of graphs:
$C_n$ for the cycle of length~$n;$ $K_n$ for the complete graph on $n$ nodes; $K_{n,m}$ for the complete bipartite graph on $n$ and $m$ nodes per partition set. 

$G$ is a $H$-\emph{subdivision}, if $G$ is obtained from $H$ by replacing edges by internally node-disjoint paths.
The graph $G-e$ is obtained from $G$ by \emph{deleting} the edge $e$. 
The graph $G/e$ is obtained from $G$ by \emph{contracting} edge $e=vw$, i.e., the nodes $v$ and $w$ are identified, and we delete the arising self-loop and merge parallel edges. $G$ contains an $H$-\emph{minor}, if $H$ can be obtained from $G$ by contracting and deleting edges. Otherwise $G$ is \emph{$H$-minor-free}.

$G$ is \emph{$k$-connected} if for each pair of nodes $v,w \in V(G)$ there exist $k$ internally node-disjoint paths from $v$ to $w$.
In particular, $1$-connected graphs are called \emph{connected}.
If $G$ is connected but not $2$-connected, there exists some \emph{cut-node} $v \in V(G)$ such that $G-v=G[V \setminus \{v\}]$ is disconnected.

For two graphs $G$ and $H$, their \emph{union} $G \cup H=(V(G) \cup V(H), E(G) \cup E(H))$ is disjoint if their node sets are; in this case we write $G \dotcup H$.
Assume two graphs $G$, $H$ contain $K_k$ as a subgraph, for some $k \in \mathbb{N}_{>0}$. The \emph{$k$-sum} (or \emph{clique-sum}) is obtained by taking the union of $G$ and $H$, identifying the $K_k$ subgraphs and possibly also removing edges contained in this specific $K_k$. A $k$-sum is \emph{strict}, if no edges are removed. We denote the strict $k$-sum of $G$ and $H$ by $G \oplus _k H$.

By Kuratowski's (Wagner's) Theorem~\cite{kuratowski,WagnerPlanar}, a graph is planar if and only if it contains no $K_5$- or $K_{3,3}$-subdivision (minor, respectively).
Given a $K_5$-subdivision $H$ contained in $G$ we call the nodes of degree 4 in $H$ \emph{Kuratowski nodes}. The paths in $H$ between these nodes are \emph{Kuratowski paths}.

\mypar{Cut Polytopes}
A \emph{polytope} $P$ is the convex hull of finitely many points in $\R^d$. 
The \emph{dimension} of $P$ is the dimension of its affine hull.
A linear inequality $a^Tx \leq b_0$ is a \emph{valid inequality} for $P$ if it is satisfied by all points $x \in P$.  It is \emph{homogenous} if $b_0=0$. A (proper) \emph{face} of $P$ is a (non-empty) set of the form $P \cap \{x \in \R^d: a^Tx = b_0\}$ for some valid inequality $a^T x \leq b_0$ with $a\neq 0$. Each face is itself a polytope. The faces of dimension~$0$ and dimension $\dim(P)-1$ are \emph{vertices} and \emph{facets}, respectively.
For polytopes $P \subseteq \R^n$ and $Q \subseteq \R^m$ we define the \emph{product} 
\mbox{$P \times Q = \{(x,y) \in \R^{n+m}: x \in P, y \in Q\}$}. It is a polytope with $\dim(P \times Q)=\dim(P) + \dim(Q)$, and the proper faces of $P \times Q$ are given by products of proper faces of $P$ and proper faces of $Q$.

If $P\cap\{x \in \R^d: a^Tx = b_0\}$ is a facet of $P$, the inequality $a^Tx \leq b_0$ is \emph{facet-defining}.
Each polytope can be represented as the bounded intersection of finitely many closed half-spaces, i.e., $P$ admits a \emph{linear description} $P=\{x \in \R^d: Ax \leq b\}$ for some matrix $A$ and some vector $b$.
This is given, e.g., by taking the system of all facet-defining inequalities.
A \emph{simplex} of dimension $d$ is the convex hull of $d+1$ affinely independent points.
A $d$-dimensional polytope $P$ is \emph{simple} if each vertex of $P$ is contained in exactly $d$ facets; the polytope is \emph{simplicial} if each facet of $P$ is a simplex.
\medskip

Given a graph $G=(V,E)$ and a subset $S \subseteq V$, the set $\delta=\delta(S)=\delta(V\setminus S)=\{e \in E: |e \cap S| =1 \}$ is a \emph{cut} in $G$.
If $G$ is connected, this gives $2^{|V|-1}$ pairwise different cuts. To each cut $\delta$ in $G$ we associate its indicator vector $x^\delta \in \R^E$ given by
$$x^\delta_e= \begin{cases}	1, & \text{ if } e \in \delta;\\
							0, & \text{ else.}			\end{cases}$$
The \emph{cut polytope} of $G$ is defined as their convex hull
$$\CutP(G)=\conv(\{ x^\delta: \delta \text{ is a cut in } G \})\subseteq \R^E,$$
and has dimension $\dim(\CutP(G))=|E(G)|$, see, e.g., \cite[p.344]{bipartitesubgraph}. For disconnected graphs $G_1\dotcup G_2$ we have
\begin{equation}\label{CutPolytopeDisjoint}
\CutP(G_1 \dotcup G_2)=\CutP(G_1) \times \CutP(G_2).
\end{equation}
Similarly, we may consider clique sums.
As many classes of graphs can be described in terms of these, it is reasonable to study their effect on cut polytopes.
\begin{thm}[see {\cite[Theorem 3.1.]{graphsnotcontractible}}]\label{facetsksum}
Let $G=G_1 \oplus_k G_2$ be a strict $k$-sum with $k \in \{1,2,3\}$. Then the facet-defining inequalities of $G$ are given by taking all facet-defining inequalities of $G_1$ and $G_2$ and identifying the variables of common edges. In particular, it holds that 
\begin{equation}\label{CutPolytope1Sum}
\CutP(G_1 \oplus_1 G_2)=\CutP(G_1) \times \CutP(G_2).
\end{equation}
\end{thm}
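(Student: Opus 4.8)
The plan is to understand the geometry of cuts under a clique-sum and reduce everything to the two facts the excerpt already grants me: the product decomposition for disjoint unions (equation \eqref{CutPolytopeDisjoint}) and the statement about products of polytopes (faces of $P\times Q$ are products of faces). First I would fix the $K_k$ shared by $G_1$ and $G_2$ on node set $T$ with $|T|=k$, and observe the key combinatorial fact: since every cut of a graph restricts to a cut of any subgraph, a cut $\delta(S)$ of the $k$-sum $G=G_1\oplus_k G_2$ restricts to a cut $\delta(S\cap V(G_1))$ of $G_1$ and $\delta(S\cap V(G_2))$ of $G_2$; conversely, a cut of $G_1$ and a cut of $G_2$ glue to a cut of $G$ precisely when they induce the \emph{same} partition of the shared clique $T$. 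For $k\in\{1,2,3\}$ the clique $K_k$ is small enough that this compatibility condition is the only constraint, because any bipartition of at most three vertices is realizable as (the trace of) a cut, and—crucially—on the common edges the two indicator vectors automatically agree once the partitions of $T$ match.

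The heart of the argument is therefore to show that the linear description of $\CutP(G)$ obtained by merging the facet systems of $G_1$ and $G_2$ (identifying variables of the common edges) is correct, i.e. it is exactly the collection of facet-defining inequalities of $G$. I would proceed in two directions. For the easy direction, every facet-defining inequality of $G_1$ (resp.\ $G_2$) lifts to a valid inequality of $\CutP(G)$: it only involves edges of $G_1$, and since every cut of $G$ restricts to a cut of $G_1$, validity is inherited; one must then check it remains facet-defining, which follows from a dimension count using that $\dim\CutP(G)=|E(G)|=|E(G_1)|+|E(G_2)|-\binom{k}{2}$ and that enough cuts of $G$ attaining equality can be assembled from cuts of the two pieces. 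For the converse—that these are \emph{all} facets—I would argue that any valid inequality for $\CutP(G)$ can be written as a nonnegative combination of the lifted inequalities, by exploiting that the clique-sum structure forces the polytope to decompose along the shared clique; the $k=1$ case specializes to the clean product statement \eqref{CutPolytope1Sum}, which I would prove first and separately.

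Concretely I would establish \eqref{CutPolytope1Sum} as a warm-up: when $G_1$ and $G_2$ share a single node (a cut-node, $k=1$), no edges are shared, the compatibility condition on a single vertex is vacuous (a one-element set has only the trivial partition to match, and the two indicator vectors live on disjoint edge sets), so cuts of $G$ correspond bijectively to independent pairs of cuts of $G_1$ and $G_2$. Hence $\CutP(G_1\oplus_1 G_2)=\CutP(G_1)\times\CutP(G_2)$ exactly as in the disjoint case, and the facet description of a product (valid inequalities of a product are generated by valid inequalities of the factors) gives the claim immediately. The cases $k=2$ and $k=3$ are genuinely harder because now edges are shared, so I would treat them by the compatibility-of-partitions bijection above and a careful facet-lifting argument.

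The step I expect to be the main obstacle is the converse/completeness direction for $k=2,3$: showing that the merged system contains \emph{no superfluous} inequalities and \emph{misses none}, i.e. that no new facet of $\CutP(G)$ arises that genuinely mixes edges of $G_1$ and $G_2$. The difficulty is that a facet of the $k$-sum could a priori involve variables from both sides simultaneously, and one must rule this out. The cleanest route is to show that every vertex (cut) of $\CutP(G)$ decomposes compatibly and that the natural projection/gluing maps interact well with the face lattice, so that a facet of $G$ is forced to be the lift of a facet of one of the factors; this is exactly where the smallness of $k$ (namely $\binom{k}{2}\le 3$ and the fact that all bipartitions of $T$ are cut-realizable) is essential, and where I would invest the bulk of the technical work.
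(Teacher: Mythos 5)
Your setup (cuts of $G$ restrict to pairs of cuts of $G_1$ and $G_2$, and a pair glues back to a cut of $G$ exactly when the two bipartitions of the shared clique $T$ agree up to complementation), your treatment of $k=1$, and the easy direction that lifted facet inequalities remain valid are all fine. But the step you yourself flag as the main obstacle---that the merged inequality system already describes $\CutP(G)$, so that no facet genuinely mixes the two sides---is the entire content of the theorem, and your proposal contains no workable mechanism for it. Worse, the reason you offer for why $k\le 3$ is special is not the right one: that ``any bipartition of at most three vertices is realizable as the trace of a cut'' is true for a clique of \emph{any} size, as is your vertex-level gluing bijection. If that were the crux, the theorem would hold for every $k$; it does not (for $k\ge 4$ the union of the two facet systems is in general \emph{not} a linear description of the $k$-sum, which is exactly why the paper raises its question about $4$-sums in the conclusion). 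The failure for large $k$ happens at the level of convex hulls, not at the level of vertices, and your argument never leaves the vertex level.

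The missing idea---which the paper itself names in its conclusion---is that for $k\le 3$ the polytope $\CutP(K_k)$ is a \emph{simplex}: its $2^{k-1}$ cut vectors are affinely independent. This is what makes the convex-hull-level gluing work. Concretely, take a point $(x,y,z)$ satisfying all merged inequalities, where $y$ is the block of common-edge coordinates; then $(x,y)\in\CutP(G_1)$ and $(y,z)\in\CutP(G_2)$, so one can write $(x,y)=\sum_i\lambda_i v_i$ and $(y,z)=\sum_j\mu_j w_j$ as convex combinations of cut vectors. Restricting both to the common edges gives two convex representations of $y$ by vertices of $\CutP(K_k)$; since that polytope is a simplex, these representations are identical, i.e., for each cut $u$ of $K_k$ the total weight of the $v_i$ restricting to $u$ equals the total weight of the $w_j$ restricting to $u$. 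One then refines the two combinations into matching pieces, glues each matched pair into a cut of $G$ via your compatibility observation, and concludes $(x,y,z)\in\CutP(G)$. Without this uniqueness of the decomposition---and it genuinely fails for $k\ge 4$, where $\CutP(K_4)$ has $8$ vertices in dimension $6$---your appeal to ``the clique-sum structure forces the polytope to decompose along the shared clique'' is an assertion of the theorem, not a proof of it. (For the record, the paper gives no proof of this statement at all: it cites Barahona, whose argument runs along the lines just sketched.)
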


Any automorphism $\phi$ of a graph $G$ gives rise to a map on cuts. Thus, $\phi$ induces a permutation on the vertices of $\CutP(G)$ by mapping $x^\delta$ to $x ^{\phi(\delta)}$, which yields a symmetry of $\CutP(G)$.
Another symmetry of cut polytopes is given by 
\emph{switching}:
\begin{lem}[Switching Lemma, see {\cite[Corollary 2.9.]{onthecutpolytope}}]
Let $G=(V,E)$ be a graph and $a^T x \leq b$ be a facet-defining inequality for $\CutP(G)$. Let $W \subseteq V$, and define 
$b'= b - \sum_{e \in \delta(W)}a_e$, and 
$a'_e=(-1)^{\mathds{1}[e \in \delta(W)]}\cdot a_e$ 
for all $e\in E$.
Then $(a')^T x \leq b'$ defines a facet of $\CutP(G)$.
\end{lem}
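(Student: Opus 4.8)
The plan is to exhibit switching as an affine symmetry of $\CutP(G)$ and then track how this symmetry transforms the given facet-defining inequality. Concretely, for the fixed set $W \subseteq V$ I would introduce the affine map $\sigma_W \colon \R^E \to \R^E$ defined coordinatewise by
\[
\sigma_W(x)_e = (-1)^{\mathds{1}[e \in \delta(W)]}\,x_e + \mathds{1}[e \in \delta(W)],
\]
which flips $x_e \mapsto 1 - x_e$ precisely on the edges $e \in \delta(W)$ and fixes every other coordinate. Applying the flip twice restores the original value, so $\sigma_W$ is an affine involution; in particular it is an affine bijection of $\R^E$ and hence preserves the dimension of every face of any polytope it maps onto itself.

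The crucial step, and the combinatorial heart of the argument, is to show that $\sigma_W$ permutes the vertices of $\CutP(G)$. I would prove the identity $\sigma_W(x^{\delta(S)}) = x^{\delta(S \triangle W)}$ for every $S \subseteq V$, where $\triangle$ denotes symmetric difference. For an edge $e = uv$ this reduces to the parity statement
\[
\mathds{1}[e \in \delta(S \triangle W)] = \mathds{1}[e \in \delta(S)] \oplus \mathds{1}[e \in \delta(W)],
\]
which follows by writing $\mathds{1}[e \in \delta(T)] = \mathds{1}[u \in T] \oplus \mathds{1}[v \in T]$ for each $T \in \{S, W, S \triangle W\}$ and rearranging the four membership bits. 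Since $S \mapsto S \triangle W$ is a bijection on the subsets of $V$, this shows $\sigma_W$ maps the vertex set of $\CutP(G)$ bijectively onto itself, and therefore $\sigma_W(\CutP(G)) = \CutP(G)$.

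With this symmetry in hand the remainder is a direct calculation. Because $\sigma_W$ is an affine automorphism of $\CutP(G)$, it carries the facet $F = \CutP(G) \cap \{x : a^T x = b\}$ onto a face of the same dimension, i.e.\ again a facet. To identify the inequality cutting out $\sigma_W(F)$, I would use the involution property to substitute $x = \sigma_W(y)$ into $a^T x \leq b$ and expand
\[
a^T \sigma_W(y) = \sum_{e \in E} a_e (-1)^{\mathds{1}[e \in \delta(W)]} y_e + \sum_{e \in \delta(W)} a_e = (a')^T y + \sum_{e \in \delta(W)} a_e .
\]
Hence for $y \in \CutP(G)$ the inequality $a^T \sigma_W(y) \leq b$ is equivalent to $(a')^T y \leq b'$, so the latter is valid for $\CutP(G)$; moreover equality holds exactly when $\sigma_W(y) \in F$, that is, exactly on $\sigma_W(F)$. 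Thus $(a')^T y \leq b'$ is a valid inequality whose equality set is the facet $\sigma_W(F)$, and is therefore facet-defining.

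The only point that requires genuine care is the vertex identity of the second paragraph; once the symmetric-difference rule for cuts is verified, everything else is formal affine bookkeeping. I would also explicitly record that $a \neq 0$ forces $a' \neq 0$ (as $a'_e = \pm a_e$), and that $\sigma_W$ being a bijective affine map is precisely what guarantees that the image of a facet is again a facet rather than a face of smaller dimension.
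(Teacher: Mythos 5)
Your proof is correct. Note that the paper does not prove this lemma at all --- it is quoted with a citation to Barahona and Mahjoub (Corollary 2.9 of \emph{On the cut polytope}) --- so there is no in-paper argument to compare against; but your argument is precisely the standard one underlying that reference, and its combinatorial core, the identity $\sigma_W(x^{\delta(S)}) = x^{\delta(S \triangle W)}$, is exactly what the paper records informally right after the lemma when it says that switching is induced by the map $\delta \mapsto \delta \triangle \delta(W)$. All the steps check out: the map $\sigma_W$ is an affine involution, it permutes the cut vectors (the parity computation is right, and is well defined despite $\delta(S)=\delta(V\setminus S)$ since $(V\setminus S)\triangle W = V\setminus(S\triangle W)$), hence maps $\CutP(G)$ onto itself and carries facets to facets, and the substitution $x=\sigma_W(y)$ yields exactly the inequality $(a')^Ty \leq b'$ with equality set $\sigma_W(F)$.
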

On the level of cuts, switching in $\delta(W)$ is induced by the map $\delta \mapsto \delta \triangle \delta(W)=(\delta \cup \delta(W))\setminus(\delta \cap \delta(W))$.
Switching a facet-defining inequality by a cut corresponding to a vertex of this facet gives a homogeneous facet-defining inequality. Thus, all symmetry classes of facets of $\CutP(G)$ contain facets of the \emph{cut cone}
$\CutC(G)= \cone(\{x \in \CutP(G)\})\subseteq \R^E$.
Hence, it suffices to understand the facets of cut cones to understand the facets of cut polytopes.

Since $\CutP(G)$ is contained in the unit cube, the inequalities $0 \leq x_e \leq 1$ are valid.  
Given a cut $\delta$ and a cycle $C$ in $G$, the number of edges in $\delta\cap C$ clearly is even. These observations give rise to the following \emph{edge}- and \emph{cycle-inequalities}:
\begin{thm}[see {\cite[Section 3]{onthecutpolytope}}]\label{facetsK5minorfree}
The valid inequalities $0 \leq x_e \leq 1$ define facets of $\CutP(G)$ if and only if $e$ does not belong to a triangle.
The valid inequalities
$$\sum_{f \in F}x_f -\sum_{e \in E(C) \setminus F} x_e \leq |F| -1,\qquad  \text{ for all cycles $C\subseteq G$, $F \subseteq E(C)$ with $|F|$ odd}$$
define facets if and only if $C$ is chordless.

Moreover, a graph $G$ is $K_5$-minor-free if and only if $\CutP(G)$ is defined completely by the cycle- and edge inequalities.
\end{thm}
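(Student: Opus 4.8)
The plan is to treat the three assertions separately. For the two facet characterizations I would reduce everything to a dimension count: since $\dim\CutP(G)=|E|$, an inequality $a^Tx\le b_0$ is facet-defining precisely when the cut vectors satisfying it with equality have affine rank $|E|$. Using the Switching Lemma I may pass to the homogeneous form and work in the cut cone $\CutC(G)$, where the origin (the empty cut) is available as a base point, so that it suffices to produce $|E|-1$ linearly independent tight cuts, respectively to exhibit an additional implied equation forcing the affine rank below $|E|$.

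For the edge inequalities I would argue the ``only if'' first: if $e=vw$ lies in a triangle $vwz$, then every cut tight for $x_e\ge 0$ keeps $v$ and $w$ on the same side, forcing $vz$ and $wz$ to cross simultaneously, so the face also satisfies $x_{vz}=x_{wz}$ and has dimension at most $|E|-2$; hence $x_e\ge 0$ is not a facet (and symmetrically for $x_e\le 1$). For the ``if'' direction, assuming $e$ lies in no triangle, I would build $|E|-1$ linearly independent cuts avoiding $e$ from a spanning tree, toggling vertices one at a time and using triangle-freeness of $e$ to keep the vectors independent without charging $e$. The cycle inequalities are handled analogously. If $C$ has a chord $f$ splitting it into $C_1,C_2$, I would show that the $C$-inequality for an odd set $F$ is the sum of the two sub-cycle inequalities along $f$: choosing the splittings $F_1\subseteq E(C_1)$, $F_2\subseteq E(C_2)$ with odd cardinalities and placing $f$ on exactly one side makes the coefficient of $f$ cancel and the right-hand sides add to $|F|-1$, so the inequality is redundant. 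Conversely, for chordless $C$ I would again construct $|E|-1$ independent tight cuts.

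The completeness statement is the deep part. For the ``only if'', a $K_5$-minor produces a facet that is neither a cycle- nor an edge-inequality: the complete-graph (hypermetric) inequality $\sum_{i<j}x_{ij}\le 6$ is facet-defining for $\CutP(K_5)$, and by the minor-lifting principle---edge deletion corresponds to projection of $\CutP(G)$ and edge contraction to passing to the face $x_e=0$---any facet of the cut polytope of a minor lifts to a facet of $\CutP(G)$, yielding a non-cycle, non-edge facet. For the ``if'' direction I would invoke Wagner's structure theorem: every edge-maximal $K_5$-minor-free graph is obtained by strict clique-sums of order at most $3$ from planar graphs and the Wagner graph $V_8$. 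By \Cref{facetsksum} the facets of such a clique-sum are exactly those inherited from the summands, so it suffices to establish the description for the two building blocks. For $V_8$ this is a finite verification, while for planar graphs I would use planar duality, under which cuts of $G$ correspond to even subgraphs of the dual $G^*$, reducing the claim to the known description of the associated $T$-join/even-subgraph polytope and recovering the cycle inequalities of $G$.

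The main obstacle I expect is the completeness direction, both in the planar base case and in the bookkeeping across clique-sums. Concretely, one must check that the inequalities surviving through a clique-sum are genuinely cycle- and edge-inequalities of $G$ in the sense of the theorem: a chordless cycle in a summand need not stay chordless after identifying the shared $K_3$, and triangles created at the glued clique can destroy former edge-facets, so the inductive step requires matching the facets inherited from the summands with the chordless cycles of $G$ itself. Establishing the planar case rigorously, rather than merely citing it, is itself substantial and is where the genuine content of the $K_5$-minor-free characterization lies.
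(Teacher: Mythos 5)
The paper itself does not prove this theorem; it is quoted verbatim from Barahona--Mahjoub \cite{onthecutpolytope}, so your attempt has to be measured against the classical proof there. Your overall architecture is in fact the classical one: the dimension-counting/switching treatment of edge and cycle inequalities (your chord-splitting identity, writing the $C$-inequality as the sum of the two sub-cycle inequalities with $f$ placed on the side that makes both odd, is correct), and, for completeness, Wagner's decomposition of edge-maximal $K_5$-minor-free graphs into strict clique-sums of planar graphs and $V_8$, \Cref{facetsksum}, planar duality to the even-subgraph/$T$-join polytope, and a finite check for $V_8$. However, there are two genuine gaps. First, your ``minor-lifting principle'' --- that any facet of the cut polytope of a minor lifts to a facet of $\CutP(G)$ --- is false as stated. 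Let $P$ be the path obtained from $K_3$ by deleting one edge: $x_f\leq 1$ defines a facet of $\CutP(P)$, but it is \emph{not} a facet of $\CutP(K_3)$, precisely because of the triangle condition in the very theorem you are proving; the same failure occurs for subgraphs and under contraction (a facet of the face $\{x\in\CutP(G):x_e=0\}$ need not meet $\CutP(G)$ in a facet of the desired form). The correct route for the ``only if'' direction is not facet lifting but minor-closure of the \emph{property}: edge deletion corresponds to coordinate projection and contraction to intersection with $\{x_e=0\}$, both for $\CutP(G)$ and for its cycle-and-edge relaxation, so if $\CutP(G)$ equaled its relaxation, the same would hold for every minor; one then exhibits the point with every coordinate equal to $2/3$, which satisfies all cycle and edge inequalities of $K_5$ but violates $\sum_{e\in E(K_5)}x_e\leq 6$.

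Second, your ``if'' direction covers only edge-maximal $K_5$-minor-free graphs: Wagner's theorem and \Cref{facetsksum} (strict sums only) give the description for those, but an arbitrary $K_5$-minor-free $G$ is obtained from a maximal one by deleting edges, and you must show the description survives deletion. Concretely, one needs that projecting out the coordinate $x_e$ from the cycle-and-edge system of $H$ yields exactly the cycle-and-edge system of $H-e$; the nontrivial half is that every point satisfying the cycle inequalities of $H-e$ admits a value of $x_e$ satisfying all cycle inequalities of $H$ through $e$. This extension/projection lemma is a real statement requiring proof (it is exactly where the Fourier--Motzkin sums of pairs of cycle inequalities must be shown to be dominated by cycle inequalities of $H-e$), not mere bookkeeping. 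By contrast, the concern you do raise --- that chordless cycles of a summand may acquire chords after gluing --- is immaterial for completeness, since redundant valid inequalities do no harm in a linear description; it matters only for the exact facet list, which in this theorem is proved directly on $G$ and not through the decomposition.
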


In particular, for each triangle $\Delta$ with $E(\Delta)=\{e,f,g\}$ the following \emph{metric inequalities} (up to permuting the edges) are facet-defining for $\CutP(G)$:
$$x_e +x_f +x_g \leq 2\qquad\text{and}\qquad
x_e -x_f -x_g \leq 0.$$

As a generalization of metric inequalities we get \emph{hypermetric} inequalities by considering the complete graph instead of triangles, see \cite[Theorem~2.4.]{onthecutpolytope}. An example for these is given by the hypermetric inequality of $K_5$ in Inequality~\eqref{K5eq}.
All above facets correspond to complete subgraphs or, in the case of cycle inequalities, subdivisions of these.
There also exist facet-defining inequalities whose support graph is not complete, see, e.g., \cite[Theorem~2.3]{onthecutpolytope}.

For complete graphs, further classes of facet-defining inequalities are given in \cite{newclassesoffacets,geometryofcuts}.
In particular, for $n \leq 7$  all facets of $\CutP(K_n)$ are classified \cite[Chapter 30.6]{geometryofcuts} and all facets of $\CutP(K_8)$ have been computed \cite[Section~8.3]{DecompositionAndParallelization}, \cite{EnumerationOfFacets}.
It is a major open problems 
to determine the facets of $\CutP(K_n)$ for $n \geq 9$.

\section{$K_{3,3}$-minor-free Graphs}\label{K33-minor-free Graphs}

In this section, we consider $K_{3,3}$-minor-free graphs and provide the complete linear description of their cut polytopes. 
We also show that this yields an efficient algorithm for \MC on $K_{3,3}$-minor-free graphs.
This complements the known facts on $K_5$-minor-free graphs.
Moreover, since $K_5$ is maximal $K_{3,3}$-minor-free but not weakly bipartite, we obtain the first full polyhedral description of a general minor-closed graph class apart from weakly bipartite graphs.

We first characterize maximal $K_{3,3}$-minor-free graphs.
Per se, this is not new: it is sometimes referenced to (different papers by) Wagner; a complete proof in modern terminology was given in \cite{Thomas_excluded}.
Here, we propose a slightly different approach, using $3$-connectivity components.
This provides a simpler, more basic proof and turns out to be directly usable for our polyhedral and our algorithmic results.

Let $G=(V,E)$ be a $2$-connected, not necessarily simple graph and let $\{v,w\}$ be a \emph{split pair} in $G$, i.e., $G-\{v,w\}$ is disconnected or there are parallel edges connecting $v$ and $w$.
The \emph{split classes} of $\{v,w\}$ are given by a partition $E_1, \dots ,E_k$ of $E$ such that two edges are in a common split class if and only if there is a path between them neither containing $v$ nor $w$ as an internal node. As $G$ is $2$-connected, it is easy to see that $v$ and $w$ are both incident to each split class. 
For a split class $C$ let $\overline{C}=E\setminus C$.
A \emph{Tutte split} replaces $G$ by the two graphs $G_1=(V(C),C \cup \{e\})$ and $G_2=(V(\overline{C}), \overline{C}\cup \{e\})$, provided that $G_1-e$ or $G_2-e$ remains $2$-connected.
Thereby, $e$ is a new \emph{virtual} edge connecting $v$ and $w$; the other edges are called \emph{original}.
Observe that this operation may yield parallel edges.
Iteratively splitting the graphs via Tutte splits gives the unique \emph{$3$-connectivity decomposition} 
of $G$. Its components can be partitioned into the following sets: a set $S$ of cycles, a set $P$ of edge bundles (two nodes joined by at least $3$ edges), and a set $R$ of $3$-connected graphs, see, e.g., \cite{tutte,triconnected}.

\begin{lem}\label{K33free_2connected}
Any maximal $K_{3,3}$-minor-free graph $G$ is $2$-connected.
\end{lem}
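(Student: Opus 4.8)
The plan is to argue by contradiction: I assume that $G$ is maximal $K_{3,3}$-minor-free but fails to be $2$-connected, and then exhibit a non-edge whose addition keeps the graph $K_{3,3}$-minor-free, contradicting maximality. Failure of $2$-connectivity means that either $G$ is disconnected or $G$ is connected with a cut-node, and I would handle these two cases separately. The only structural facts I rely on are that $K_{3,3}$ is $3$-connected (in particular $2$-connected and bridgeless), together with the standard observation that a $2$-connected graph is a minor of $G$ only if it is a minor of a single block of $G$, and the $3$-connected refinement of this statement across a separation of order $2$ (see, e.g., \cite{diestel}).

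First, suppose $G$ is disconnected, with components $C_1, C_2, \dots$, and pick $u \in C_1$, $w \in C_2$. Then $uw \notin E(G)$ and $uw$ is a bridge of $H := G + uw$, so the blocks of $H$ are exactly the blocks of $G$ together with the new trivial block $\{u,w\} \cong K_2$. Since $K_{3,3}$ is $2$-connected, any $K_{3,3}$-minor of $H$ would have to lie inside a single block; but $K_2$ has no such minor, and every block of $G$ is a subgraph of the $K_{3,3}$-minor-free graph $G$. Hence $H$ is $K_{3,3}$-minor-free, contradicting maximality, and so $G$ is connected.

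Next, suppose $G$ is connected with a cut-node $v$, and write $G = G_1 \cup G_2$ with $V(G_1) \cap V(G_2) = \{v\}$ and each $G_i$ containing an edge. I would choose a neighbor $a$ of $v$ in $G_1$ and a neighbor $b$ of $v$ in $G_2$ (which exist by connectivity); as $a$ and $b$ lie in different components of $G-v$, we have $ab \notin E(G)$, and I set $H := G + ab$. The key point is that $\{v,a\}$ is a separation pair of $H$ of order $2$, with the two sides $V(G_1)$ and $V(G_2) \cup \{a\}$: deleting $v$ and $a$ leaves $V(G_1)\setminus\{v,a\}$ separated from $V(G_2)\setminus\{v\}$. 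Because $K_{3,3}$ is $3$-connected, any $K_{3,3}$-minor of $H$ must already be a minor of $H[V(G_1)] + va$ or of $H[V(G_2) \cup \{a\}] + va$. By the choice of $a$ and $b$, both $va$ and $vb$ are edges of $G$; therefore the first graph equals $G_1 \subseteq G$, while in the second the vertex $a$ is adjacent only to $v$ and $b$, so suppressing this degree-$2$ vertex yields the already-present edge $vb$ and exhibits the graph as a minor of $G_2 \subseteq G$. In either case a $K_{3,3}$-minor of $H$ produces one of $G$, a contradiction, so $H$ is $K_{3,3}$-minor-free, again contradicting maximality.

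The main obstacle is exactly this cut-node case: adding any edge across $v$ merges the two sides into one block (the triangle on $v,a,b$), so the clean ``a $2$-connected minor lives in a single block'' argument of the disconnected case no longer isolates the two sides. This is where I genuinely need the $3$-connectivity of $K_{3,3}$ rather than mere $2$-connectivity, through the separation-pair refinement, and where choosing $a$ and $b$ \emph{adjacent} to $v$ pays off: it guarantees that the virtual edge $va$ arising from the separation is already present in $G$, so no spurious minor is introduced on either side. The only remaining point to check is the degenerate situation in which one side of the cut-node reduces to a single pendant vertex, but there the vertex $a$ is immediately absorbed into the triangle and the conclusion follows directly.
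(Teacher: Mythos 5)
Your proof is correct and follows essentially the same route as the paper: in both, the disconnected case is handled by adding an edge between components, and the cut-node case by adding an edge between a neighbor of the cut-node on each side and invoking the $3$-connectivity of $K_{3,3}$ to rule out a new minor. The only difference is one of detail: you make explicit the separation-pair/torso argument (and the role of the already-present edges $va$, $vb$) that the paper compresses into the remark that $\tilde{G}$ has only two routes between $G_1$ and $G_2$ while $K_{3,3}$ is $3$-connected.
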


\begin{proof}
Clearly, $G$ is connected, as otherwise we could join two connected components via an edge without obtaining a $K_{3,3}$-minor.
Assume that $G$ is not $2$-connected and let $v \in G$ be a cut-node separating $G$ into $G_1$ and~$G_2$. 
Choose $w_1 \in G_1$ and $w_2 \in G_2$ adjacent to $v$ and obtain the graph $\tilde{G}$ from $G$ by adding the edge $w_1 w_2$.
As a sidenote, this operation retains planarity for planar $G$.
Since $\tilde{G}$ contains only two paths between $G_1$ and $G_2$ but $K_{3,3}$ is $3$-connected, 
$\tilde{G}$ is still $K_{3,3}$-minor-free. This contradiction concludes the proof.
\end{proof}

\begin{prop}\label{characterizationK33minorfree}
Let $G$ be a maximal $K_{3,3}$-minor-free graph.
Then, $G$ can be decomposed as a strict clique-sum $G= G_1 \oplus_2 \dots \oplus_2 G_k$, where each $G_i$ is either a planar triangulation or a copy of $K_5$.
\end{prop}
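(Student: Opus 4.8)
The plan is to run the whole argument through the $3$-connectivity decomposition described above, which is available because $G$ is $2$-connected by \Cref{K33free_2connected}. This writes $G$ as an iterated $2$-sum of the components in $S\cup P\cup R$ (cycles, edge bundles, and $3$-connected graphs), glued along the split pairs that carry the virtual edges. Each such component is a minor of $G$: contracting each side of the relevant $2$-separation down to the split pair recovers the virtual edge, so every component, together with its virtual edge, is itself $K_{3,3}$-minor-free. The organizing principle I would rely on is that, since $K_{3,3}$ is $3$-connected, it cannot be modelled across a $2$-separation except through the two separating vertices; hence a $K_{3,3}$-minor of a $2$-sum already lives inside one of the two augmented summands. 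Consequently $G$ is $K_{3,3}$-minor-free if and only if each summand-plus-virtual-edge is, which lets maximality act on the components one at a time. This reduces everything to understanding the $3$-connected components in $R$.

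For these I would prove the key structural fact that \emph{a $3$-connected $K_{3,3}$-minor-free graph is either planar or a copy of $K_5$}. A self-contained route uses Kuratowski's Theorem: a non-planar graph contains a $K_5$- or $K_{3,3}$-subdivision, so a non-planar $K_{3,3}$-minor-free graph must contain a $K_5$-subdivision. One then argues that $3$-connectivity together with the absence of a $K_{3,3}$-minor forces this $K_5$-subdivision to be $K_5$ itself: any genuine subdivision of an edge, or any additional vertex attached to the $K_5$-model in a $3$-connected way, can be rerouted into a $K_{3,3}$-minor. I expect this to be the main obstacle, since it requires a careful case analysis of how Kuratowski paths and extra attachments interact in a $3$-connected host; the surrounding decomposition bookkeeping is comparatively routine once this fact is in hand.

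Finally I would invoke maximality and simplicity to finish. The same $2$-sum principle shows that adding an edge whose addition keeps the affected augmented summand planar keeps all of $G$ planar, hence $K_{3,3}$-minor-free; by maximality such an edge must already be present. This forces every planar $3$-connected component to be a triangulation (a non-triangular face would admit such a chord), collapses every cycle component of length at least four to a triangle, and shows that edge bundles cannot contribute a nontrivial summand in a simple graph: they merely record that several summands share a common edge, and are absorbed once the surrounding faces are triangulated or densified into $K_5$. The only non-planar survivor allowed by the characterization is $K_5$, whose virtual edges are all real since $K_5$ is complete. The same argument gives strictness: if a split pair $\{v,w\}$ were non-adjacent, adding $vw$ would keep each augmented summand planar (or leave a $K_5$-summand unchanged) and thus preserve $K_{3,3}$-minor-freeness, contradicting maximality; hence every shared $K_2$ is a genuine edge and no edges are deleted in the gluing. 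Assembling these observations yields $G=G_1\oplus_2\dots\oplus_2 G_k$ with each $G_i$ a planar triangulation or a copy of $K_5$, as claimed.
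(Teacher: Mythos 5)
Your overall strategy is the same as the paper's: you pass to the $3$-connectivity decomposition guaranteed by \Cref{K33free_2connected}, observe that a $K_{3,3}$-minor (being $3$-connected) must already occur in a single component of the decomposition together with its virtual edge, and then let maximality act component-wise --- forcing every virtual edge to be an original edge (hence strict $2$-sums), collapsing cycle components to triangles, forcing planar $3$-connected components to be triangulations, and reducing the proposition to the claim that a non-planar $3$-connected $K_{3,3}$-minor-free graph is a copy of $K_5$. All of this bookkeeping matches the paper's proof and is sound.

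The genuine gap is that this last claim --- which you yourself flag as ``the main obstacle'' --- is precisely where the mathematical content of the proposition lies, and you leave it as an expectation rather than an argument. Asserting that any proper $K_5$-subdivision or any extra attached vertex ``can be rerouted into a $K_{3,3}$-minor'' is a restatement of what must be proven, not a proof. The paper settles it with a short two-case analysis. First, if $H \neq K_5$ contains $K_5$ as a subgraph, then (since $K_5$ is complete) $H$ has a vertex outside it, and $3$-connectivity yields three disjoint paths from that vertex to the $K_5$; contracting them gives $K_5$ plus a vertex joined to three of its nodes, the graph of \Cref{K33minor_proof}(\subref{K33minor_a}), which contains $K_{3,3}$ as a subgraph: one side consists of the new vertex and the two $K_5$-nodes it misses, the other side of the three attachment nodes. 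Second, if $H$ contains a proper $K_5$-subdivision with Kuratowski nodes $w_1,\dots,w_5$, pick an internal node $v$ of a Kuratowski path, say the one joining $w_1$ and $w_2$; $3$-connectivity gives disjoint paths from $v$ to three distinct Kuratowski nodes, which together with the subdivision produces the graph of \Cref{K33minor_proof}(\subref{K33minor_b}) as a minor, and this contains a $K_{3,3}$-minor with sides $\{v,w_4,w_5\}$ and $\{w_1,w_2,w_3\}$. Without carrying out (at least) this case analysis, your proposal reduces the proposition to an unproven lemma of essentially the same depth as the proposition itself.
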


\begin{proof}
Let $G$ be a maximal $K_{3,3}$-minor-free graph. By \Cref{K33free_2connected}, $G$ is $2$-connected, so we may consider its $3$-connectivity decomposition.  
Whenever a virtual edge $ab$ was introduced, both parts of the Tutte split contain a path between $a$ and $b$.
Furthermore, $G$ contains a $K_{3,3}$-minor if and only if one of the components of its decomposition does.
But then, if $G$ would not contain an original edge connecting $a$ and $b$, we could introduce it without creating a $K_{3,3}$-minor.
Thus, each virtual edge corresponds to an edge $e \in E(G)$ by maximality of $G$, and
$G$ is the strict $2$-sum of cycles and $3$-connected graphs.
By maximality of $G$, the cycles are triangles, a trivial form of a planar triangulations.

Let $H$ be a $3$-connected graph from this sum.
If $H$ is planar, then~--~by maximality~--~it is a triangulation.
Otherwise, by Kuratowski's Theorem, $H$ contains a $K_5$-subdivision.
Assume that $H \neq K_5$.
If $H$ contains $K_5$ as a subgraph, then it contains the graph shown in \Cref{K33minor_proof}(\subref{K33minor_a}) as a minor, and thus a $K_{3,3}$-minor, which yields a contradiction. 

Assume that $H$ contains a proper $K_5$-subdivision with Kuratowski nodes $S=\{w_1,\dots,w_5\}$ and let $v \in V(H) \setminus S$ be a node of this subdivision.
Since $H$ is 3-connected, there are disjoint paths from $v$ to three pairwise distinct Kuratowski nodes, say $w_1, w_2, w_3$. 
But then $H$ contains the graph of \Cref{K33minor_proof}(\subref{K33minor_b}) as a minor, which itself contains a $K_{3,3}$-minor. This concludes the proof.
\end{proof}

\begin{figure}
\begin{subfigure}{.33\textwidth}\centering
\begin{tikzpicture}[scale=.75]
	\path[use as bounding box] (-1.7,-2) rectangle (4,1.8);
	\node[circle,draw, scale=0.5] (v1) at (360/5:1.8cm) {};
	\node[circle,fill, scale=0.5] (v2) at (2*360/5:1.8cm) {};
	\node[circle,fill, scale=0.5] (v3) at (3*360/5:1.8cm) {};
	\node[circle,draw, scale=0.5] (v4) at (4*360/5:1.8cm) {};
	\node[circle,draw, scale=0.5] (v5) at (5*360/5:1.8cm) {};
\foreach \phi in  {1,...,5}{
	\foreach \psi in {1,...,\phi}  {       
      \draw (v\phi) -- (v\psi);}
      }
	\node[circle,fill, scale=0.5, right of=v5, node distance =2cm](a) {};
	\draw (a)--(v1);
	\draw (a)--(v4);
	\draw (a)--(v5);
\end{tikzpicture}
\caption{}
\label{K33minor_a}
\end{subfigure}%
\begin{subfigure}{.33\textwidth}\centering
\begin{tikzpicture}[every node/.style={circle, draw, scale=.5, fill=white},scale=.75]
	\path[use as bounding box] (-2.1,-2) rectangle (2,1.8);
	\node[label={[label distance=-6]50:\Large{$w_1$}}](v1) at (360/5:1.8cm) {};
	\node[fill=black,label={[label distance=0pt]90:\Large{$v$}}] (v6) at ($0.5*(v1)+0.5*(v2)$) {};
	\node[label={[label distance=-6pt]120:\Large{$w_2$}}] (v2) at (2*360/5:1.8cm) {};
	\node[label={[label distance=0pt]200:\Large{$w_3$}}] (v3) at (3*360/5:1.8cm) {};
	\node[fill=black,label={[label distance=-4pt]-80:\Large{$w_4$}}] (v4) at (4*360/5:1.8cm) {};	 
	\node[fill=black,label={[label distance=-4pt]0:\Large{$w_5$}}] (v5) at (5*360/5:1.8cm) {};
			 
	\draw (v1)--(v6);
	\draw (v1)--(v3);
	\draw (v1)--(v4);
	\draw (v1)--(v5);
	\draw (v2)--(v3);
	\draw (v2)--(v4);
	\draw (v2)--(v5);
	\draw (v2)--(v6);
	\draw (v3)--(v5);
	\draw (v3)--(v4);%
	\draw (v4)--(v5);
	\draw (v6) edge[decorate,decoration={snake,segment length=1mm,amplitude=.3mm}, out=150, in=150, distance=25mm](v3);
	 
\end{tikzpicture}
\caption{}
\label{K33minor_b}
\end{subfigure}%
%
%
%
\caption{Graphs of the proof of \Cref{characterizationK33minorfree}}\label{K33minor_proof}
\end{figure}

\Cref{characterizationK33minorfree} allows us to classify all facets of cut polytopes of maximal $K_{3,3}$-minor-free graphs:

\begin{thm}\label{facetsK33minorfree}
Let $G$ be a maximal $K_{3,3}$-minor-free graph. Then all facets of $\CutP(G)$ are given by the metric inequalities for each triangle contained in $G$ and switchings of the facet-defining inequality 
\begin{equation}\label{K5eq}
\sum_{e \in E(K_5)} x_e \leq 6 \qquad\text{ for each }K_5\text{-subgraph.} 
\end{equation}
\end{thm}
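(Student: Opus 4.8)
The plan is to combine the clique-sum decomposition of Proposition~\ref{characterizationK33minorfree} with the facet-transfer result of Theorem~\ref{facetsksum} (for $k=2$), reducing the problem to identifying the facets of the two possible summand types: planar triangulations and copies of $K_5$. By Proposition~\ref{characterizationK33minorfree}, a maximal $K_{3,3}$-minor-free graph $G$ decomposes as $G=G_1\oplus_2\cdots\oplus_2 G_k$, where each $G_i$ is either a planar triangulation or a copy of $K_5$. Theorem~\ref{facetsksum} tells us that the facet-defining inequalities of $\CutP(G)$ are obtained by collecting the facet-defining inequalities of each $\CutP(G_i)$ and identifying variables along the common edges of the $2$-sums. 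Hence it suffices to describe the facets of $\CutP(G_i)$ for the two summand types separately.

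First I would treat the planar triangulation summands. A planar triangulation is $K_5$-minor-free (indeed it is even planar, hence contains no $K_5$-minor), so Theorem~\ref{facetsK5minorfree} applies: $\CutP(G_i)$ is described completely by the edge- and cycle-inequalities, and the facet-defining ones are exactly those for chordless cycles and for edges not in any triangle. Since $G_i$ is a triangulation, every edge lies in a triangle, so no edge-inequality is facet-defining; and every chordless cycle in a triangulation is a triangle, so the only facet-defining cycle-inequalities are the metric inequalities of the triangles. Thus each planar triangulation summand contributes precisely the metric inequalities of its triangles.

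Next I would handle the $K_5$ summands. Here I would invoke the known complete facet description of $\CutP(K_5)$ (available from the classification of $\CutP(K_n)$ for small $n$ cited in the Preliminaries, e.g. \cite[Chapter 30.6]{geometryofcuts}): up to switching, the facets of $\CutP(K_5)$ are exactly the metric (triangle) inequalities together with the single hypermetric inequality~\eqref{K5eq}. Combining the two summand analyses, every facet of $\CutP(G)$ pulls back, via the $2$-sum identification of Theorem~\ref{facetsksum}, to either a metric inequality of a triangle of $G$ or a switching of~\eqref{K5eq} for a $K_5$-subgraph of $G$. Conversely, each such inequality is facet-defining because it is facet-defining on its summand and the $2$-sum transfer preserves this. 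One must also check that every triangle of $G$ arises as a triangle of some summand, which follows because a $2$-sum is glued along an edge (a $K_2$), so any triangle of $G$ lies entirely within a single summand.

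The main obstacle I anticipate is the interface between the two ingredients: verifying that Theorem~\ref{facetsksum} genuinely transfers \emph{all} facets and introduces no new ones, and that inequalities arising in distinct summands do not coincide or merge after variable identification in a way that would produce additional or redundant facets. In particular, care is needed to argue that a triangle appearing along a shared $2$-sum edge is counted once and that the virtual/original edge identification does not spuriously create a $K_5$-subgraph spanning two summands. These checks are essentially bookkeeping, but they are where the argument must be made precise; the heavy lifting is entirely delegated to Theorems~\ref{facetsksum} and~\ref{facetsK5minorfree} and the known $\CutP(K_5)$ description.
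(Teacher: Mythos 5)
Your proposal follows the paper's own strategy step for step: decompose $G$ via \Cref{characterizationK33minorfree}, transfer facets across the strict $2$-sums via \Cref{facetsksum}, and describe the facets of the two summand types separately (planar triangulations via \Cref{facetsK5minorfree}, and $K_5$ via the known description of $\CutP(K_5)$ from \cite[Chapter 30.6]{geometryofcuts}). The bookkeeping issues you raise at the end (each triangle lies in a single summand, no spurious $K_5$'s across a $2$-sum edge) are indeed minor.

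However, your treatment of the triangulation summands rests on a false claim: it is \emph{not} true that every chordless cycle in a planar triangulation is a triangle, i.e., planar triangulations need not be chordal. The octahedron $K_{2,2,2}$ is a planar triangulation (and is itself maximal $K_{3,3}$-minor-free): writing its node set as $\{1,\dots,6\}$ with non-edges $14$, $25$, $36$, the cycle through $1,2,4,5$ is chordless, since its only potential chords $14$ and $25$ are non-edges. By the very statement of \Cref{facetsK5minorfree} that you invoke, the cycle inequality $x_{12}-x_{24}-x_{45}-x_{15}\leq 0$ therefore defines a facet of $\CutP(K_{2,2,2})$; one can also check directly that its roots contain $12$ affinely independent cut vectors, so it is a facet of this $12$-dimensional cut polytope. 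Its support has four edges, so it is neither a metric (triangle) inequality nor a switching of the $K_5$-inequality \eqref{K5eq} (the octahedron has no $K_5$-subgraph). Hence your step ``each planar triangulation summand contributes precisely the metric inequalities of its triangles'' fails for non-chordal triangulations (the icosahedron likewise yields chordless $C_5$-facets). You should know that the paper's own proof glosses over exactly this point: it asserts without justification that ``all facets of a planar triangulation are given by metric inequalities.'' Your attempt to justify that assertion has in fact surfaced a genuine gap in the argument as stated; to be correct, the facet list must also include the cycle inequalities of chordless cycles of length at least four arising in non-chordal triangulation summands.
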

\begin{proof}
We know from \Cref{facetsksum} that the facets of the cut polytope of a $2$-sum of graphs are given by taking all facets of the cut polytopes of both graphs and identifying common variables. Moreover, by \Cref{facetsK5minorfree} all facets of a planar triangulation are given by metric inequalities; the facets of $\CutP( K_5)$ are given by metric inequalities and switchings of (\ref{K5eq})~\cite[Chapter 30.6]{geometryofcuts}. Since maximal $K_{3,3}$-minor-free graphs are $2$-sums of copies of $K_5$ and planar triangulations, this yields the claimed result.
\end{proof}

We can use \Cref{facetsK33minorfree} to classify the facets of the cut polytope of \emph{any} $K_{3,3}$-minor-free graph. 
\begin{cor}
Let $G$ be a $K_{3,3}$-minor-free graph.
Then, $G$ can be decomposed as a (not necessarily strict) k-sum of planar graphs and/or copies of $K_5$, with $k=1,2$.

Let $H$ be a maximal $K_{3,3}$-minor-free graph containing $G$.
Then, the facets of $\CutP(G)$ are obtained by projecting $\CutP(H)$ onto $\{x_e=0~:~ e \in E(H)\setminus E(G) \}$.
\end{cor}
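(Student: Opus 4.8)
The plan is to reduce everything to the maximal case already settled in \Cref{facetsK33minorfree}. First I would extend $G$ to a maximal $K_{3,3}$-minor-free graph $H$ on the same vertex set, by greedily adding edges as long as no $K_{3,3}$-minor is created; this process terminates and produces an $H$ with $V(H)=V(G)$ and $E(G)\subseteq E(H)$. Write $F=E(H)\setminus E(G)$ for the set of deleted edges. The two assertions are then handled separately: the structural decomposition of $G$ is obtained by tracking these deletions through \Cref{characterizationK33minorfree}, while the facet description is obtained from that of $\CutP(H)$ by a projection argument.

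For the decomposition, I would apply \Cref{characterizationK33minorfree} to $H$ to write $H=H_1\oplus_2\dots\oplus_2 H_k$ with each $H_i$ a planar triangulation or a copy of $K_5$, and then delete the edges of $F$. Each edge of $H$ is either an original edge contained in a single summand or the identified edge of one of the $2$-sums. Deleting an original edge replaces a planar triangulation by a graph that is still planar, and replaces a $K_5$ by $K_5-e$, which is again planar; hence every summand either remains planar or stays an untouched copy of $K_5$. Deleting an identified edge turns the corresponding strict $2$-sum into a non-strict one, and whenever the two parts then share only a single vertex (as happens at cut-nodes of $G$) the sum becomes a $1$-sum. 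Collecting these observations shows that $G$ is a (not necessarily strict) $k$-sum with $k\in\{1,2\}$ of planar graphs and copies of $K_5$, as claimed.

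The core of the statement is the facet description, whose key step is the polytope identity $\CutP(G)=\pi(\CutP(H))$, where $\pi\colon\R^{E(H)}\to\R^{E(H)}$ is the linear map setting the coordinates indexed by $F$ to zero. This follows from the vertex correspondence between the two cut polytopes: for every $S\subseteq V$ one has $\delta_G(S)=\delta_H(S)\cap E(G)$, so $\pi\bigl(x^{\delta_H(S)}\bigr)$ agrees with $x^{\delta_G(S)}$ on the coordinates of $E(G)$ and vanishes on $F$. Since these vectors are exactly the vertices of $\CutP(H)$ and of $\CutP(G)$, respectively, and $\pi$ is linear, applying $\pi$ to $\CutP(H)=\conv\bigl(\{x^{\delta_H(S)}:S\subseteq V\}\bigr)$ yields precisely $\CutP(G)$, embedded in the subspace $\{x_e=0:e\in F\}$. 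As \Cref{facetsK33minorfree} supplies the complete list of facet-defining inequalities of $\CutP(H)$, eliminating the variables indexed by $F$ from this system (for instance by Fourier--Motzkin elimination) yields the facet-defining inequalities of the projection, hence those of $\CutP(G)$.

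The hard part will be conceptual rather than computational: one must use the geometric projection $\pi$ and \emph{not} merely set the $F$-variables to zero in the inequality description of $\CutP(H)$. Setting these variables to zero describes the face $\CutP(H)\cap\{x_e=0:e\in F\}$, which corresponds only to those cuts of $H$ that avoid every edge of $F$ and is in general a proper subpolytope of $\CutP(G)$; the correct object is the image $\pi(\CutP(H))$. Keeping this distinction straight, together with carefully recording in the decomposition step which summands remain planar and which stay copies of $K_5$, and the passage from strict to non-strict sums, are the points that require the most care.
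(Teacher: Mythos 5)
Your proposal is correct and follows essentially the same route as the paper: the decomposition is read off from \Cref{characterizationK33minorfree} by deleting edges from the strict $2$-sum decomposition of the maximal graph $H$, and the facet claim rests on the observation that edge deletion corresponds, on the level of cut polytopes, to the coordinate projection $\pi$ (whose image you correctly identify via the vertex correspondence $\delta_G(S)=\delta_H(S)\cap E(G)$, and correctly distinguish from the face $\CutP(H)\cap\{x_e=0: e\in E(H)\setminus E(G)\}$). Your write-up is in fact a fleshed-out version of the paper's very terse proof, adding the greedy construction of $H$ and the convexity argument that the paper leaves implicit.
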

\begin{proof}
The decomposition claim follows from \Cref{characterizationK33minorfree}.
Alternatively, we can obtain $G$ from $H$ by deleting edges. On the level of cut polytopes, the effect of an edge deletion $e \in E(H)$ corresponds to a projection onto $\{x \in \R^E : x_e=0\}$.
\end{proof}

On the level of facets, a projection of a polytope to a coordinate hyperplane is given by eliminating variables. This can be done by Fourier-Motzkin elimination \cite[Chapter 1.2]{ziegler}, which is made more precise in the following example.

\begin{ex}\label{exampleK33}
Consider the non-maximal $K_{3,3}$-minor-free graph $G$ shown in \Cref{example_projection}. It is obtained by taking the non-strict $2$-sum of two copies of $K_5$. Let these copies of $K_5$ be $G_1=(V,E)$ and $G_2=(W,F)$ with $V=\{v_1,v_2,v_3,u_1,u_2 \}$ and $W=\{u_1,u_2,w_1,w_2,w_3 \}$.
\begin{figure}[h]
\centering
\begin{tikzpicture}[every node/.style={circle, draw,minimum size=4mm}]
	\node (w1) at (360/5:1.8cm) {$w_1$};
	\node (w2) at (360/5*2:1.8cm) {$u_2$};
	\node (w3) at (360/5*3:1.8cm) {$u_1$};
	\node (w4) at (360/5*4:1.8cm) {$w_3$};
	\node (w5) at (360/5*5:1.8cm) {$w_2$};
    \draw (w1)--(w2);
    \draw (w1)--(w3);
    \draw (w1)--(w4);
    \draw (w1)--(w5);
    \draw (w2)--(w4);
    \draw (w2)--(w5);
    \draw (w3)--(w4);
    \draw (w3)--(w5);
    \draw (w4)--(w5);
      
 \node[draw=white] (y) at ($(w2)+(w3)$) {};
\begin{scope}[yscale=1,xscale=-1]
	\node (v1) at ($(y)+(360/5:1.8cm)$)   {$v_1$};
	\node (v4) at ($(y)+(360/5*4:1.8cm)$) {$v_3$};
	\node (v5) at ($(y)+(360/5*5:1.8cm)$) {$v_2$};
	\draw (v1)--(w2);
    \draw (v1)--(w3);
    \draw (v1)--(v4);
    \draw (v1)--(v5);
    \draw (w2)--(v4);
    \draw (w2)--(v5);
    \draw (w3)--(v4);
    \draw (w3)--(v5);
    \draw (v4)--(v5);
\end{scope}
\end{tikzpicture}
\caption{Graph of \Cref{exampleK33}}
\label{example_projection}
\end{figure}

Both $G_1-u_1u_2$ and $G_2-u_1u_2$ are planar and each edge is contained in a triangle.
Thus, all facets of their cut polytopes are given by metric inequalities, and those are also facets of $\CutP(G)$.
All other facets of $\CutP(G)$ are obtained by taking a pair of facets ${\mathfrak f}_1$ of $\CutP(G_1)$ and ${\mathfrak f}_2$ of $\CutP(G_2)$ and eliminating the variable $x_{u_1u_2}$ by summing the corresponding inequalities. 
In the following we focus on the latter class of facets.
Choosing one representative for each class of facet-defining inequalities of $G_1$ and $G_2$ we get:
\\
\newcommand{\mytab}{\tabto{7cm}}
\begin{enumerate}[(1)]
\item one metric inequality of $G_1$:      \mytab $x_{u_1u_2} +x_{u_2v_1}+x_{u_1v_1} \leq 2$, 
\\[-2ex]
\item one hypermetric inequality of $G_1$: \mytab $\sum_{e \in E}x_e \leq 6$,                
\\[-2ex]
\item one metric inequality of $G_2$:      \mytab $-x_{u_1u_2}-x_{u_2w_1}+x_{u_1w_1} \leq 0$,
\\[-2ex]
\item one hypermetric inequality of $G_2$: \mytab $ \sum_{f \in F:\ u_2 \notin F} x_f - \sum_{f \in F:\ u_2 \in f}x_f \leq 2$.
\end{enumerate}

Using Fourier-Motzkin elimination we have to sum each pair of inequalities such that there is one facet of each graph:
\begin{align*}
x_{u_2v_1}+x_{u_1v_1}-x_ {u_2w_1}+x_{u_1w_1} & \leq 2, \tag{1+3}\\
\sum_{f \in F:\ u_2 \notin f} x_f -x_{u_2w_1}-x_{u_2w_2} -x_{u_2w_3} +x_{u_2v_1}+x_{u_1v_1} &\leq 4,		\tag{1+4}\\
\sum_{e \in E:\ e \neq u_1u_2} x_e  -x_{u_2w_1}+x_{u_1w_1} &\leq 6,	\tag{2+3}\\
\sum_{e \in E:\ e \neq u_1u_2} x_e+ \sum_{f \in F:\ u_2 \notin f} x_f -x_{u_2w_1}-x_{u_2w_2}-x_{u_2w_3} &\leq 8. \tag{2+4}
\end{align*}
(1+3) is a cycle inequality. Switching  (1+4) at $\delta(\{u_2\})$ shows that this inequality is equivalent to (2+3). These inequalities correspond to  copies of $K_5$ with one subdivided edge contained in $G$. The support graph of facet (2+4) is $G$: This type of inequality is  neither facet-defining for complete graphs nor does it belong to one of the mentioned classes of facet-defining inequalities in \Cref{Preliminaries}. \hfill$\blacktriangleleft$
\end{ex}

As demonstrated in the above example, Fourier-Motzkin elimination yields all facet-defining inequalities of a 
non-maximal $K_{3,3}$-minor-free graph as sums of metric inequalities and hypermetric $K_5$-inequalities.
The \emph{support graph} of a valid inequality $\mathfrak f$ for $\CutP(G)$ is the graph $H\subseteq G$ induced by edges with non-zero coefficients in $\mathfrak f$.
From \Cref{facetsK5minorfree} we can thus deduce that the support graph of a facet-defining inequality is an edge, a cycle or contains a $K_5$-minor.
Considering the sum of two facets ${\mathfrak f}_1$ and ${\mathfrak f}_2$ used to eliminate the variable $x_e$ we observe the following:
If ${\mathfrak f}_2$ is a cycle-inequality, summing it to ${\mathfrak f}_1$ acts on the support graph of ${\mathfrak f}_1$ as subdividing~$e$;
the effect of subdividing an edge in the support graph of a facet is described in \cite[Corollary~2.10]{onthecutpolytope}.
If ${\mathfrak f}_2$ is a hypermetric $K_5$-inequality, summing it to ${\mathfrak f}_1$ acts on the support graph of ${\mathfrak f}_1$ as replacing $e$ by $K_5-e$; 
all non-zero coefficients of the obtained inequality are $\pm 1$. 
Although possible, it is tedious to determine the exact signs and thus the constant term of the inequalities. However, we can concisely describe the facets' support graphs.
\begin{cor}
Let $\mathfrak f$ be a facet of the cut polytope of a $K_{3,3}$-minor-free graph $G$.
All its non-zero coefficients are $\pm1$ and
its support graph is an induced subgraph of $G$ that is either
\begin{itemize}
\item an edge that is not contained in a triangle, or
\item obtained from a triangle or a $K_5$ by repeatedly (possibly zero times) subdividing edges and/or replacing an edge $e$ by $K_5-e$.
\end{itemize}
\end{cor}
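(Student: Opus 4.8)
The plan is to read the result off from two facts already assembled above: the facet list of a maximal host graph (\Cref{facetsK33minorfree}) and the support dichotomy deduced from \Cref{facetsK5minorfree}. Fix a maximal $K_{3,3}$-minor-free graph $H\supseteq G$. As the discussion preceding the statement records, restricting a facet of $\CutP(G)$ to its support yields a facet of the cut polytope of that support graph, so by \Cref{facetsK5minorfree} the support is an edge, a chordless cycle, or a graph containing a $K_5$-minor. The first two cases already match the statement: a single-edge support is the edge inequality, which is facet-defining exactly when the edge lies in no triangle (first bullet, coefficient $\pm1$); a chordless-cycle support is a cycle inequality, whose coefficients are all $\pm1$, whose support is an induced cycle, and which is a triangle with subdivided edges (second bullet, triangle branch). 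Hence the real content is the $K_5$-minor case.

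For that case I would run the Fourier--Motzkin build-up. Passing from $H$ to $G$ is a sequence of edge deletions, i.e. projections onto hyperplanes $\{x_e=0\}$ eliminating the deleted variables, and by \Cref{facetsK33minorfree} every facet of $\CutP(H)$ is a metric inequality of a triangle or a switching of \eqref{K5eq}; in both cases all coefficients are $\pm1$ and the support is a triangle or a $K_5$, which serves as the base of an induction (zero operations). In the inductive step, eliminating $x_e$ combines an already-built facet $\mathfrak f_1$ with an atomic facet $\mathfrak f_2$ whose support meets that of $\mathfrak f_1$ only in $e$ --- forced by the $2$-sum tree structure of $H$, in which $\mathfrak f_2$ sits in the leaf piece being peeled off and consecutive pieces share exactly the clique edge (\Cref{facetsksum}). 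Thus both coefficients on $e$ are $\pm1$, the cancelling combination is the plain sum $\mathfrak f_1+\mathfrak f_2$, every other coefficient is inherited from a single summand and stays $\pm1$, and on supports the two moves are precisely subdividing $e$ (summing with a chordless cycle, \cite[Corollary~2.10]{onthecutpolytope}) or replacing $e$ by $K_5-e$ (summing with a $K_5$-switching).

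The hard part will be the assertion that the support is an \emph{induced} subgraph of $G$, which I would carry as an extra inductive hypothesis and verify survives each move. Two kinds of forbidden chords must be ruled out. Chords among the freshly introduced nodes are excluded because the attached cycle is chordless and the attached $K_5$ is complete except for the eliminated edge $e=uv$, together with the point that $e$ is genuinely deleted in $G$, so the five nodes of a $K_5$-replacement induce exactly $K_5-uv$ rather than a full $K_5$. Chords between a new node and an old support node are excluded by the same $2$-sum separation, since new nodes lie in the peeled piece and old nodes in the rest, and the two pieces share only $\{u,v\}$. The one place where genuine care is needed --- and the reason I prefer to dispatch the edge- and cycle-cases through \Cref{facetsK5minorfree} rather than through the build-up --- is that combining two facets \emph{within} a single piece (for instance two switchings of one triangle, which yield $x_e\le 1$) can make coefficients double and require renormalization; restricting the build-up to the genuinely cross-piece combinations that occur in the $K_5$-minor case is what keeps all coefficients honestly $\pm1$.
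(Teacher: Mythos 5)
Your proposal is correct and follows essentially the same route as the paper, whose own ``proof'' is the informal discussion preceding the corollary: embed $G$ in a maximal $K_{3,3}$-minor-free host, invoke \Cref{facetsK33minorfree} for the atomic facets (triangle metric inequalities and switched hypermetric $K_5$-inequalities), and track Fourier--Motzkin elimination of the deleted edges, where summing with a cycle inequality subdivides $e$ and summing with a $K_5$-inequality replaces $e$ by $K_5-e$. Your additional care about induced-ness of the support and about coefficient doubling from within-piece combinations spells out details the paper leaves implicit, but stays within the same framework.
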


\mypar{Algorithmic Consequences}
Barahona \cite[Section 4]{graphsnotcontractible} gave an $\mathcal{O}(|V|^4)$ algorithm for \MC on $K_5$-minor-free graphs. Complementing this result an algorithm for \MC on $K_{3,3}$-minor-free graphs whose running time is identical to that of planar \MC is given. Currently, the best known running time for this is $\mathcal{O}(|V|^\frac{3}{2} \log |V|)$ \cite{MaxCutPlanar2,MaxCutPlanar1}.
We use a data structure to efficiently consider the components of the $3$-connectivity decomposition of~$G$.
Recall that they are cycles $S$, edge bundles $P$, and $3$-connected graphs $R$.
The \emph{SPR-tree} $T=T(G)$ has a node for each element of $S$, $P$, and~$R$~\cite{dibatt,spr}\footnote{The data structure is also known as \emph{SPQR-tree}. However, the originally proposed nodes of type $Q$ (as well as the tree's orientation) have often turned out to be superfluous.}. 
For a node $v \in V(T)$, let $H_v$ denote its corresponding component. 
Two nodes $v,w \in V(T)$ are adjacent if and only if $H_v$ and $H_w$ share a virtual edge. 
$G$ can be reconstructed from $T$ by taking the non-strict $2$-sum of components whenever their corresponding nodes are adjacent in $T$. 
Following this interpretation, $P$-nodes containing a non-virtual edge represent strict $2$-sums of their adjacent components of the decomposition. 
$T$ has only linear size and can be computed in $\mathcal{O}(|E(G)|)$ time~\cite[Lemma 15]{triconnected}.

\begin{thm}
The \MC problem on $K_{3,3}$-minor-free graphs can be solved in the same time complexity as \MC on planar graphs.
\end{thm}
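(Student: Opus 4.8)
The plan is to turn the structural decomposition into a dynamic program over the SPR-tree $T=T(G)$, solving one weighted planar \MC instance per component. First I would reduce to the $2$-connected case: cut-nodes give rise to $1$-sums, and by \eqref{CutPolytope1Sum} (equivalently, because a cut of a $1$-sum is an independent choice of cut in each block) the maximum cut value is simply the sum of the maximum cut values of the blocks, so it suffices to treat each $2$-connected component separately. For a $2$-connected $K_{3,3}$-minor-free $G$ I would use the decomposition underlying \Cref{characterizationK33minorfree}: the nodes of $T$ are cycles, edge bundles, and $3$-connected components, and every $3$-connected $K_{3,3}$-minor-free graph is planar or a copy of $K_5$ (a $K_5$ cannot contain the six-vertex graph $K_{3,3}$ as a minor). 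Thus each component $H_v$ is either planar or $K_5$, and adjacent components share a single virtual edge, along which $G$ is the (non-strict) $2$-sum of the corresponding subgraphs.

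The engine of the algorithm is the behaviour of \MC under a $2$-sum. If $G=G_1\oplus_2 G_2$ is glued at the split pair $\{v,w\}$, then every cut of $G$ is the union of a cut of $G_1$ and a cut of $G_2$ that agree on whether $v$ and $w$ lie on the same side, and the weights add. Hence, writing $\mathrm{mc}^{\mathrm s}(H)$ and $\mathrm{mc}^{\mathrm d}(H)$ for the maximum cut weight in $H$ subject to $v,w$ being on the same side resp. on different sides, one has $\mathrm{mc}(G)=\max\{\mathrm{mc}^{\mathrm s}(G_1)+\mathrm{mc}^{\mathrm s}(G_2),\ \mathrm{mc}^{\mathrm d}(G_1)+\mathrm{mc}^{\mathrm d}(G_2)\}$, taking care to count the weight of each original edge exactly once. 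I would root $T$ and process it in post-order, maintaining for each node $v$ the pair $(\mathrm{mc}^{\mathrm s},\mathrm{mc}^{\mathrm d})$ of the subtree below $v$, measured through its parent virtual edge. The contribution of an already-processed child $i$, joined along a virtual edge $c_i$, depends only on whether $c_i$ is cut, so I fold it into the weight of $c_i$ inside $H_v$ by setting that weight to $A_i^{\mathrm d}-A_i^{\mathrm s}$ and adding the constant $A_i^{\mathrm s}$ to the running total. After folding all children, the two values at $v$ are obtained by solving \MC on $H_v$ twice, forcing the parent virtual edge to be cut resp. uncut; this is done by reweighting that (already present) virtual edge with $\pm M$ for a sufficiently large $M$ and subtracting $M$ off, or by contracting it in the uncut case. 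Reweighting or contracting an existing edge preserves planarity, so each solve is a genuine planar \MC call when $H_v$ is planar, and a brute-force computation over the $2^5$ bipartitions when $H_v=K_5$. The weighted planar \MC algorithm handles the arbitrary (in particular negative) weights produced by the folding.

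For the running time, let $n_v=|V(H_v)|$ and let $T_{\mathrm{pl}}(m)=\mathcal O(m^{3/2}\log m)$ denote the cost of planar \MC. The SPR-tree has linear total size, so $\sum_v n_v=\mathcal O(|V(G)|)$ and each virtual edge is touched a bounded number of times; building $T$ costs $\mathcal O(|E(G)|)=\mathcal O(|V(G)|)$ since $K_{3,3}$-minor-free graphs are sparse. The $K_5$-nodes cost $\mathcal O(1)$ each and there are only linearly many, while the planar nodes cost $\sum_v T_{\mathrm{pl}}(n_v)\le \log|V(G)|\sum_v n_v^{3/2}\le \log|V(G)|\,\bigl(\sum_v n_v\bigr)^{3/2}=\mathcal O(|V(G)|^{3/2}\log|V(G)|)$, using $\sum_v n_v^{3/2}\le(\sum_v n_v)^{3/2}$. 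Thus the total running time matches that of a single planar \MC call, as claimed. The main obstacle I anticipate is not any single step but the bookkeeping that makes the dynamic program correct: one must verify that gluing cuts along virtual edges never double-counts an original edge and that the folded weights reproduce exactly the subtree optima in both the cut and uncut states. The superadditivity estimate $\sum_v n_v^{3/2}\le(\sum_v n_v)^{3/2}$ is what ultimately prevents the per-component solves from exceeding the planar budget.
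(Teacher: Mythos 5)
Your proposal is correct and is essentially the paper's own proof: both reduce to $2$-connected components, use the SPR-tree whose $R$-nodes are planar or $K_5$, and propagate, for each component's shared (virtual) edge, the difference of the two constrained maximum-cut values as a new edge weight while accumulating the constant term --- your post-order DP with folded weight $A_i^{\mathrm d}-A_i^{\mathrm s}$ is exactly the paper's leaf elimination with $\gamma=\beta^+-\beta^-$, and the running-time analysis via superadditivity of $p(n)$ is the same. The only (harmless) presentational differences are that the paper first inserts weight-$0$ original edges to make all $2$-sums strict, whereas you reweight the virtual edges directly and make explicit the big-$M$/contraction trick for the constrained solves that the paper leaves implicit.
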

\begin{proof}
Let $G=(V,E)$ be a $K_{3,3}$-minor-free graph with edge weights $c_e$, $e \in E$. 
Let $p(n) \in \Omega(n)$ be the best known running time for \MC on planar graphs with $n$ nodes.
For $A,B \subseteq E$ we denote by $\beta_G(A,B)$ the maximum weight over cuts $\delta\subseteq E(G)$ with $A \subseteq \delta$ and $B \cap \delta = \emptyset$.
If $G$ is not $2$-connected, we apply the algorithm to its $2$-connected components (which can be identified in linear time). 
Assume in the following that $G$ is $2$-connected. 

We want to insert ``original'' edges of weight $0$ into $G$ between split pairs corresponding to Tutte splits.
This will allow us to only consider strict $2$-sums. To this end compute the SPR-tree $T=T(G)$.
For any $P$-node $v \in V(T)$ whose $H_v$ contains only virtual edges, introduce a new original edge of weight $0$ into $H_v$, and therefore also into $G$.
For any adjacent non-$P$-nodes $v,w \in V(T)$, let $ab$ be the virtual edge shared between their components.
We introduce a new original edge $ab$ into $G$. This yields a new $P$-node $u$ subdividing the edge $vw$ in $T$.
The edge bundle $H_u$ contains the new original edge together with two virtual edges, one shared with $H_v$, the other with $H_w$.
By this construction, for every virtual edge there is an original edge with the same end nodes.
Throughout the following, we always consider the weight of a virtual edge $ab$ to be identical to the weight of the original edge $ab$.
We continue to denote the resulting graph and tree by $G$ and $T$, respectively. 

Let $v$ be a leaf in $T$ and $ab$ be the virtual edge contained in $H=H_v$.
Note that $v$ is either an $S$- or an $R$-node and thus, $H$ is either a copy of $K_5$ or planar.
We compute $ \beta^+=\beta_H(\{ab\}, \emptyset)$ and $\beta^-=\beta_H(\emptyset, \{ab\})$. 
If $H=K_5$, this requires only constant time. Thus the needed work is bounded by $\mathcal{O}(p(|(V(H)|))$. 
Let $\gamma = \beta^+-\beta^-$ be the gain/loss by having $ab$ in the cut, respectively.
Removing $v$ from $T$ and therefore all edges of $H_v$ from $G$ yields a graph $G'$.
$T(G')$ is obtained from $T-v$ by removing the potential $P$-node-leaf (and considering the ``dangling'' virtual edge as original, retaining its current cost).
Setting the cost of the \emph{original} edge $ab$ to $\gamma$ (after the computation of $\beta^+$ and $\beta^-$) yields that the maximum cut on $G$ is exactly $\beta^-+\xi$, where $\xi$ is the maximum cut in $G'$ (after updating the edge weight).

In this way, we can iteratively compute a maximum cut on $G$ by eliminating all nodes of its SPR-tree.
The SPR-tree of $G$ can be built in $\mathcal{O}(|E|)$ time.
Let $H_1, \dots , H_k$ be the components corresponding to $R$- and $S$-nodes in $T(G)$, $k \leq |V|$. 
By planarity (or constant size of $K_5$), we have $|E(H_i)| \in \mathcal{O}(|V(H_i|))$, and hence $|E| \in \mathcal{O}(|V|)$.
For each $H_i$, $i \in [k]$, we require only $\mathcal{O}(p(|V(H_i)|))$ time.
Since $p(|V|) \in \Omega(|V|)$ we have $\sum_{i=1}^k p(|V(H_i)|)\in \mathcal{O}(p(|V|))$. The claim follows.
\end{proof}

\section{Simple and Simplicial Cut Polytopes}\label{Simple and Simplicial Cut Polytopes}
In this section, we completely characterize graphs whose cut polytopes are simple or simplicial. 

In \cite{cutpolytopesimple}, it was claimed that $\CutP(G)$ is simple if and only if $G$ contains no $C_4$-minor. Unfortunately, the given proof has some gaps. For example, \cite[Proposition~3.2.4.]{cutpolytopesimple} claims that a $0$-$1$-polytope is simple if and only if it is smooth. The  proof mistakenly assumes that $\CutP(G)$ is always the polytope corresponding to the cut-variety in the sense of toric geometry. It is then used that a toric variety is smooth if and only if the corresponding polytope is, see \cite[Theorem 2.4.3]{ToricVarieties}.
However, the cut polytope $\CutP(K_3)$ is simple but not smooth, since the edges $(1,1,0)$, $(1,0,1)$ and $(0,1,1)$ do not form a basis of $\mathbb{Z}^3$. Contrarily the cut variety of $K_3$ is smooth, see \cite[Corollary 2.4]{ToricGeometyCutsAndSplits}.

Nevertheless, in the following we show that the claimed characterization of graphs whose cut polytopes are simple 
is true.
Our proof only requires basic tools from graph theory and discrete geometry.
\begin{defi}
An \emph{ear} in a graph $G$ is a maximal path whose internal nodes have degree $2$ in $G$. An \emph{ear decomposition} of a $2$-connected graph $G$ is a decomposition $G=\bigcup_{i=0}^n G_i$ such that $G_0$ is a cycle and $G_k$ is an ear of $\bigcup_{i=0}^k G_i$ for all $1 \leq k \leq n$.
\end{defi}
A graph is $2$-connected if and only if it admits an ear decomposition, see, e.g., \cite[Proposition 3.1.2]{diestel}. This allows us to prove the following:
\begin{lem}\label{C4minorfree}
Let $G$ be a connected graph. Then the following are equivalent:
\begin{enumerate}[(i)]
\item $G$ is $C_4$-minor-free;
\item $G=G_1 \oplus_1 \dots \oplus_1 G_k$ with $G_i=K_2$ or $G_i=K_3$ for each $i \in [k]$.
\end{enumerate}
\end{lem}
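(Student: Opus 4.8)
The statement is an equivalence between a minor condition and a clique-sum decomposition, so I would prove both implications separately, with the graph-structural analysis concentrated in the $(i)\Rightarrow(ii)$ direction.

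The easy direction is $(ii)\Rightarrow(i)$. A $1$-sum (gluing at a single cut-node) cannot create a cycle passing through both summands, since any cycle lives entirely within one $2$-connected block. Thus every cycle of $G$ lies inside some $G_i$, and since each $G_i$ is $K_2$ or $K_3$, the only cycles are triangles. A $C_4$-minor, however, requires a $2$-connected subgraph that can be contracted onto a $4$-cycle; in particular it needs two internally disjoint paths of combined length at least $4$ between some pair of nodes, which forces a cycle of length $\geq 4$ somewhere, or equivalently a $2$-connected block on at least $4$ nodes. Since each block here has at most $3$ nodes, no $C_4$-minor can exist. I would phrase this cleanly by noting that $C_4$-minor-freeness is preserved under $1$-sums and that $K_2,K_3$ are trivially $C_4$-minor-free.

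For the main direction $(i)\Rightarrow(ii)$, I would reduce to $2$-connected graphs. Since a $1$-sum decomposition exactly splits $G$ into its blocks (the maximal $2$-connected subgraphs and bridge-edges $K_2$), it suffices to show that every $2$-connected $C_4$-minor-free graph is $K_2$ or $K_3$. Here is where I would invoke the ear-decomposition tool recalled just before the lemma: write the $2$-connected block as $G_0 \cup \dots \cup G_n$ with $G_0$ a cycle and each later $G_k$ an ear. The key observation is that $C_4$-minor-freeness is extremely restrictive: the initial cycle $G_0$ must itself be a triangle (a longer cycle \emph{is} a $C_4$-minor by contracting edges), and \emph{no} ear can be added. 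Adding any ear to a cycle $C_m$ produces a subdivision of the theta-graph, which contains a $C_4$-minor (two of the three internally disjoint paths between the ear's endpoints have total length $\geq 4$, contractible to $C_4$). Hence $n=0$ and the block is exactly a triangle, unless the block is the single edge $K_2$ (which has the trivial ear decomposition / is not $2$-connected in the strict sense but arises as a bridge).

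The step I expect to be the main obstacle is making the ear argument airtight regarding lengths and the boundary cases. I need to verify carefully that (a) any cycle of length $\geq 4$ genuinely yields a $C_4$-minor by repeated edge-contraction, and (b) attaching even a \emph{single-edge} ear (a chord) to a triangle creates a $C_4$-minor — this is the tight case, since $K_3$ plus a chord is a multigraph, but in a simple graph the smallest problematic configuration is a triangle with one subdivided edge, i.e.\ $C_4$ itself, or $K_4$ minus an edge. I would handle this by showing that two internally disjoint $u$–$v$ paths whose lengths sum to at least $4$ always contract to $C_4$, and checking that the ear decomposition forces exactly this whenever $n \geq 1$. The interplay between the simple-graph assumption standing throughout the paper and the parallel edges that a naive ear argument might introduce is the subtle point to get right.
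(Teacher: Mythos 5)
Your proposal is correct and follows essentially the same route as the paper's proof: reduce to blocks via the $1$-sum (block) decomposition, then use the ear decomposition to show that any $2$-connected $C_4$-minor-free graph is $K_3$, since the initial cycle must be a triangle and attaching any ear creates a $C_4$-minor. The extra care you take with the tight cases (cycles of length $\geq 4$ contracting to $C_4$, and simple-graph ears on a triangle having an internal node) fills in details the paper leaves implicit, but it is the same argument.
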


\begin{proof}
Since $K_2$ and $K_3$ are $C_4$-minor-free and $1$-sums create cut-nodes, it is easy to see that $(ii)$ implies $(i)$. To show the reverse direction, let $G$ be a $C_4$-minor-free graph. Considering its $2$-connected components gives a decomposition $G=G_1 \oplus_1 \dots \oplus_1 G_k$, where $G_i=K_2$ or $G_i$ is $2$-connected.

It is left to show that the only $2$-connected $C_4$-minor-free graph is $K_3$.
Assume that $G$ is a $2$-connected $C_4$-minor-free graph and consider its ear-decomposition $G=G_0 \cup \dots \cup G_k$. Since $G$ is $C_4$-minor-free, $G_0$ is a copy of $K_3$. Attaching an ear to two of its nodes would yield a $C_4$-minor. Hence $G=K_3$.
\end{proof}

Given this characterization, we are able to show that $C_4$-minor-free graphs are exactly those graphs whose cut polytopes are simple.

\begin{thm}
The following are equivalent:
\begin{enumerate}[(i)]
\item $\CutP(G)$ is simple;
\item $G$ is $C_4$-minor-free.
\end{enumerate}
\end{thm}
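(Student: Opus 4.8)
The plan is to reduce the statement to the case of $2$-connected graphs and there to count, at one cleverly chosen vertex, the facets of the cut polytope.

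First I would pass to blocks. Writing $G$ as the disjoint union of its components and each component as an iterated strict $1$-sum of its blocks, \eqref{CutPolytopeDisjoint} and \eqref{CutPolytope1Sum} turn $\CutP(G)$ into a product $\prod_B \CutP(B)$ over the blocks $B$ of $G$. Because the proper faces of a product are exactly the products of proper faces (as recalled in \Cref{Preliminaries}), a vertex $(p,q)$ of $P\times Q$ lies precisely on the facets $F\times Q$ with $p\in F$ and $P\times H$ with $q\in H$; since $\dim(P\times Q)=\dim P+\dim Q$, the product is simple if and only if each factor is. On the graph side, \Cref{C4minorfree} (applied componentwise) says that $G$ is $C_4$-minor-free if and only if every block is $K_2$ or $K_3$. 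As $\CutP(K_2)$ is a segment and $\CutP(K_3)$ is a $3$-simplex, both simple, the whole theorem reduces to the claim: \emph{for a $2$-connected graph $B$, $\CutP(B)$ is simple if and only if $B=K_3$.}

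For this I would exploit the symmetry of $\CutP(B)$. By the Switching Lemma, switching by $\delta(S)$ is an affine automorphism of $\CutP(B)$ carrying the vertex $x^{\delta(S)}$ to the origin $x^{\emptyset}=0$; hence the symmetry group acts transitively on vertices and the number of facets through a vertex is the same at every vertex. Since $\dim\CutP(B)=|E(B)|$, it follows that $\CutP(B)$ is simple if and only if the origin lies on exactly $|E(B)|$ facets, and a facet $\{a^T x=b\}\cap\CutP(B)$ contains the origin exactly when $b=0$. I would then lower-bound these homogeneous facets using \Cref{facetsK5minorfree}: every edge $e$ lying in no triangle contributes $x_e\ge 0$, and every chordless cycle $C$ contributes, for each $f\in E(C)$, the cycle inequality with $F=\{f\}$ and right-hand side $|F|-1=0$. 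These inequalities are pairwise distinct (single-edge support versus full-cycle support), so the origin lies on at least
\[
L \;:=\; \#\{e:\ e \text{ lies in no triangle}\}\;+\;\sum_{C\text{ chordless}}|E(C)|
\]
facets. Writing the second term as $\sum_{e}c(e)$, where $c(e)$ counts the chordless cycles through $e$, and using that in a $2$-connected graph every edge lies in at least one chordless cycle (the shortest cycle through it is induced), gives $L\ge |E(B)|$, with equality only under condition $(\ast)$: every edge lies in exactly one chordless cycle and that cycle is a triangle. Thus if $\CutP(B)$ is simple, then $|E(B)|=\#\{\text{facets through }0\}\ge L\ge |E(B)|$ forces $(\ast)$; conversely $K_3$ attains $L=|E|=3$ and is simple.

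The hard part, and the final step, is to deduce $B=K_3$ from $(\ast)$. Here $(\ast)$ says that $B$ is chordal (no induced cycle of length $\ge 4$) and that every edge lies in exactly one triangle. I would take a simplicial vertex $v$ (chordal graphs have one); $2$-connectivity forces $\deg v\ge 2$, while $(\ast)$ rules out $\deg v\ge 3$, since then some edge at $v$ would lie in two triangles. Hence $\deg v=2$ with adjacent neighbours $a,b$, so $avb$ is a triangle on $ab$. If $|V(B)|\ge 4$, then $2$-connectivity yields an $a$–$b$ path avoiding $v$ (the only such path through $v$ is $a\,v\,b$), so $ab$ lies in a cycle of the chordal graph $B-v$ and therefore in a triangle $abx$ there with $x\neq v$; this is a second triangle of $B$ containing $ab$, contradicting $(\ast)$. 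Thus $|V(B)|=3$ and $B=K_3$. I expect the main obstacle to be exactly this passage from the purely local condition $(\ast)$ to the global conclusion $B=K_3$; the preceding facet count is essentially bookkeeping, but it does require care that the listed inequalities are genuinely facet-defining and mutually distinct (both guaranteed by \Cref{facetsK5minorfree}) so that no hidden double counting disturbs the equality analysis of $L$.
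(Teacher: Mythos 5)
Your proof is correct, and its first half follows the same route as the paper's: reduce to $2$-connected blocks via \Cref{CutPolytopeDisjoint} and \Cref{CutPolytope1Sum} together with the fact that a product of polytopes is simple if and only if each factor is, combine with \Cref{C4minorfree}, and settle the $2$-connected case by counting facets through the origin using the edge- and cycle-inequalities of \Cref{facetsK5minorfree}. The finish, however, is genuinely different. The paper exhibits excess facets by a case distinction: for $G=C_n$, $n\geq 4$, the edge inequalities $x_e\geq 0$ give $|E|$ further facets through $0$; for $G\neq C_n$ it argues that some cycle has a chord and that such a chord lies in two chordless cycles. You instead analyze the equality case of the count (simplicity forces every edge to lie in a triangle and in exactly one chordless cycle) and then derive $B=K_3$ by purely graph-theoretic means: this condition forces $B$ to be chordal, and a simplicial-vertex argument pins down the structure. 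Your version is longer but arguably more robust: read literally, the paper's claim that every $2$-connected non-cycle contains a cycle with a chord is false (e.g.\ $K_{2,3}$ has no such chord), although the conclusion it actually needs --- some edge lying on two chordless cycles --- still holds there; your equality analysis sidesteps this delicate point entirely, at the price of invoking standard chordal-graph theory (existence of simplicial vertices) where the paper stays with ear decompositions.

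Two minor points of rigor on your side. First, the switching/vertex-transitivity observation is superfluous: simplicity by definition requires the particular vertex $0$ to lie on exactly $|E(B)|$ facets, which is the only direction you use. Second, the step ``$2$-connectivity yields an $a$--$b$ path avoiding $v$, so $ab$ lies in a cycle of $B-v$'' is too terse: the edge $ab$ itself is an $a$--$b$ path avoiding $v$, so what you actually need is that $ab$ is not a bridge of $B-v$. This is true, and your parenthetical remark contains the key: any path of $B$ through $v$ must enter and leave via $a$ and $b$, so if $ab$ were a bridge of $B-v$, then one of $a,b$ would be a cut-node of $B$ (separating $b$, respectively $a$, from the nontrivial side of the bridge, which exists since $|V(B)|\geq 4$), contradicting $2$-connectivity. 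With that sentence spelled out, your argument is complete.
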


\begin{proof}

If $G$ is not connected, then $\CutP(G)$ is the product of the cut polytopes of the connected components of $G$. Since the product of polytopes is simple if and only if each of the polytopes is simple, it suffices to show the equivalence for connected graphs.

If $G$ is not $2$-connected, it can be decomposed as ${G=G_1 \oplus _1 ...\oplus _1 G_k}$ such that $G_i$ is either $2$-connected or a copy of $K_2$. Hence $\CutP(G)= \CutP(G_1) \times ... \times \CutP(G_k)$ is simple if and only if $\CutP(G_i)$ is simple for all $i \in [k]$.

We show that for $2$-connected graphs, the cut polytope is simple if and only if the graph equals $K_3$.
As $\CutP(K_2)$ is simple this fact together with \Cref{C4minorfree} yields the claim.

Observe that $\CutP(K_3)$ is a simplex. It hence remains to show that $\CutP(G)$ is not simple in case that $G$ is $2$-connected  and $G \neq K_3$.  
In this case, each edge $e \in E(G)$ is contained in a cycle and thus in particular in a chordless cycle $C_e$. By \Cref{facetsK5minorfree} the inequalities
\begin{equation}\label{cycleineq}
x_e - \sum_{f \in E(C_e) \setminus \{e\}} x_f \leq 0
\end{equation}
define $|E(G)|$ many different facets of $\CutP(G)$ that contain the origin.

If $G=C_n$, $n \geq 4$, no edge is contained in a triangle and $ x_e \geq 0$ defines a facet of $\CutP(G)$ for all $e \in E$. Hence, $0$ is contained in at least $2|E(G)|$ many different facets and as $\dim(\CutP(G)) = |E(G)|$, the cut polytope 
is not simple.

Similarly, if $G \neq C_n$, then there has to exist a chord $e$ in some cycle. In particular, $e$ lies in two chordless cycles. Thus,  the origin is contained in at least $|E|+1$ facets and hence $\CutP(G)$ is not simple.
\end{proof}

Next we study graphs whose cut polytopes are simplicial.
It was shown in \cite{geometryofcuts} that the cut polytope of $K_n$ is not simplicial for $n \geq 5$. We generalize this result by giving a complete characterization of graphs with simplicial cut polytopes. We start by proving that the cut polytope of an \emph{arbitrary} graph on at least $5$ nodes is not simplicial.
In addition to this, \Cref{tabularsimplicial} lists all graphs on at most four non-isolated nodes and indicates whether their cut polytopes are simplicial or not.

\tikzset{every path/.style={},
graph/.pic={
\node[circle,fill, scale=0.2] (a) at (0,0) {};
\node[circle,fill, scale=0.2] (b) at (.3,0) {};
\node[circle,fill, scale=0.2] (c) at (.3,.3) {};
\node[circle,fill, scale=0.2] (d) at (0,.3) {};
}}
\tikzset{every path/.style={},
graph1/.pic={
\node[circle,fill, scale=0.2] (a) at (0,0) {};
\node[circle,fill, scale=0.2] (b) at (.3,0) {};
}}
\tikzset{every path/.style={},
graph2/.pic={
\node[circle,fill, scale=0.2] (a) at (0,0) {};
\node[circle,fill, scale=0.2] (b) at (.3,0) {};
\node[circle,fill, scale=0.2] (d) at (0,.3) {};
}}

\begin{table}[ht]
\begin{center}
\caption{All graphs on $n \leq 4$ non-isolated nodes (cf. \Cref{Simplicial_characterization})}\label{tabularsimplicial}
\renewcommand{\arraystretch}{1.3}
\begin{tabular}{| c | c | c | c | c | c | c | c | c | c | c |}

\hline
Graph $G$ &
	\tikz{\pic {graph1}; \draw (a)--(b);}	&
	\tikz[baseline=1]{\pic {graph2}; \draw (a)--(b); \draw (a)--(d);}	&
	\tikz[baseline=1]{\pic {graph2}; \draw (a)--(b);\draw (a)--(d); \draw (b)--(d);}	&
	\tikz[baseline=1]{\pic {graph}; \draw (a)--(b); \draw (b)--(c); \draw (a)--(d);  }	&
	\tikz[baseline=1]{\pic {graph}; \draw (a)--(b); \draw (a)--(c); \draw (a)--(d);}	&
	\tikz[baseline=1]{\pic {graph}; \draw (a)--(b);  \draw (c)--(d); }	&
	\tikz[baseline=1]{\pic {graph};\draw (a)--(b); \draw (b)--(c); \draw (c)--(d); \draw(d)--(a); }	&
	\tikz[baseline=1]{\pic {graph}; \draw (a)--(b);\draw (a)--(d); \draw (b)--(d); \draw (b)--(c);}	&
	\tikz[baseline=1]{\pic {graph};\draw (a)--(b);\draw (a)--(d); \draw (b)--(d); \draw(c)--(d); \draw (c)--(b); }	&
	\tikz[baseline=1]{\pic {graph}; \draw (a)--(b);\draw (a)--(d); \draw (b)--(d);\draw(c)--(d); \draw (c)--(b); \draw (a)--(c);}	
\\ \hline
 $\CutP (G)$ simplicial?	&  \cmark	&\cmark		&\cmark		&\xmark	& \xmark		&	\cmark	&	\cmark	&	\xmark	&	\xmark	&	\cmark	 \\ \hline
Proof case: &		&		&	&	(a)	&	(a)	&	&	&	(b)	&	(b)	& \\ \hline

\end{tabular}
\renewcommand{\arraystretch}{1}
\end{center}
\end{table}

\begin{prop}\label{simplicial_5vertices}
Let $G=(V,E)$ with $|V|=n \geq 5$ without isolated nodes. Then $\CutP(G)$ is not simplicial.
\end{prop}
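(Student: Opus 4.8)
The plan is to exhibit a single facet of $\CutP(G)$ that fails to be a simplex. Recall that $\dim\CutP(G)=|E|$, so every facet has dimension $|E|-1$ and is a simplex if and only if it has exactly $|E|$ vertices. Hence it suffices to produce one facet carrying strictly more than $|E|$ cut vertices. First I would reduce to the case that $G$ is connected: if $G$ decomposes into components $G_1,\dots,G_c$ with $c\ge 2$, then iterating \eqref{CutPolytopeDisjoint} gives $\CutP(G)=\CutP(G_1)\times\cdots\times\CutP(G_c)$. Since no node is isolated, each factor has dimension $\ge 1$, and as $n\ge 5$ the total dimension $|E|$ is at least $3$; grouping the factors into two blocks $P,Q$ one can always arrange $\dim P\ge 1$ and $\dim Q\ge 2$. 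Then $Q$ has a facet $F$ of positive dimension, and $P\times F$ is a facet of $\CutP(G)$ that is a product of two polytopes of positive dimension, hence not a simplex. So from now on I assume $G$ connected.

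For connected $G$ I would split into two cases according to whether $G$ contains a triangle. If $G$ has a triangle on nodes $a,b,c$, then by \Cref{facetsK5minorfree} the metric inequality $x_{ab}+x_{ac}+x_{bc}\le 2$ is facet-defining (a triangle is a chordless cycle). Its vertices are exactly the cuts in which the triangle is split, i.e.\ all cuts except those keeping $a,b,c$ on a common side; the latter are in bijection with the cuts of the connected graph $G/\{a,b,c\}$ on $n-2$ nodes. Counting gives $2^{n-1}-2^{n-3}=3\cdot 2^{n-3}$ vertices on this facet. If $G$ is triangle-free, I pick any edge $e$; since $e$ lies in no triangle, \Cref{facetsK5minorfree} makes $x_e\ge 0$ facet-defining, and its vertices are the cuts not using $e$, which biject with the cuts of the connected graph $G/e$ on $n-1$ nodes, giving $2^{n-2}$ vertices.

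It then remains to compare these counts with $|E|$. In the triangle case I would bound $|E|\le\binom n2$ and verify $3\cdot 2^{n-3}>\binom n2$ for all $n\ge 5$ (base case $12>10$, and the left side grows by a factor $2$ per step while the right side grows by $\tfrac{n+1}{n-1}<2$). In the triangle-free case Mantel's theorem gives $|E|\le\lfloor n^2/4\rfloor$, and one checks $2^{n-2}>n^2/4$, equivalently $2^n>n^2$, for all $n\ge 5$. In either case the chosen facet carries more than $|E|$ vertices and hence is not a simplex, so $\CutP(G)$ is not simplicial.

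The steps are mostly routine; the one place that needs care is the vertex count, i.e.\ the claim that the cuts lying on the chosen facet are in bijection with the cuts of a contracted graph and that distinct such cuts give distinct vertices of $\CutP(G)$. This uses connectivity (so that a graph on $m$ nodes has exactly $2^{m-1}$ cuts and distinct vertex-bipartitions yield distinct indicator vectors) together with a short check that contraction does not collapse these cuts. The small inequalities $3\cdot 2^{n-3}>\binom n2$ and $2^n>n^2$ are elementary inductions and are precisely where the hypothesis $n\ge 5$ enters: for $n=4$ both can fail, matching the mixed behaviour recorded in \Cref{tabularsimplicial}.
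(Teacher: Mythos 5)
Your proof is correct and follows essentially the same route as the paper: in the connected case you use the very same facets (the triangle facet $x_e+x_f+x_g\le 2$, resp.\ the edge facet $x_e\ge 0$), the same vertex counts $3\cdot 2^{n-3}$ resp.\ $2^{n-2}$, and the same Tur\'an/Mantel comparison against $|E|$. The only cosmetic difference is the disconnected case, where you exhibit a non-simplex product facet directly, while the paper instead invokes the characterization of when a product of polytopes is simplicial; both arguments are equally short and valid.
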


\begin{proof}
First note that the product $P \times Q$ of polytopes $P$ and $Q$ is simplicial if and only if either $\dim(P)=\dim(Q)=1$ or one is simplicial and the other one is a point.
The latter case is not relevant for us, since a graph with edges has a cut polytope of dimension at least $1$.

Assume that $G$ is disconnected with connected components $G_1, \dots, G_r$. Then $\CutP(G)=\CutP(G_1) \times \dots \times \CutP(G_r)$ is simplicial if and only if $r=2$ and $\CutP(G_1)$ and $\CutP(G_2)$ are $1$-dimensional. This holds if and only if $G_1$ and $G_2$ are copies of $K_2$. Hence, if $G$ is disconnected, $\CutP(G)$ is not simplicial for $n \geq 5$.

If $G$ is connected, we consider the following two cases:

\emph{(a) $G$ contains no triangle:} Let $e=uv \in E$ be an edge of $G$. As, by assumption, $e$ is not contained in a triangle, $x_e \geq0$ defines a facet of $\CutP(G)$. For each ${S \subseteq V \setminus\{u,v\}}$, the indicator vector of the cut $\delta(S \cup \{u,v\})$ is a vertex of this facet. Hence, this facet contains at least $2^{n-2}$ vertices. On the other hand, since $G$ contains no triangle, Turáns Theorem (see \cite[Theorem 7.1.1.]{diestel}) yields $\dim(\CutP(G))=|E| \leq \lfloor \frac{n^2}{4}  \rfloor$. Since $n \geq 5$, we have $2^{n-2} > \frac{n^2}{4}$ which implies that $\CutP(G)$ is not simplicial.

\emph{(b) $G$ contains a triangle:} Let $\Delta=(W,F)$ with $W=\{v_1,v_2,v_3\}$ and $F=\{e,f,g\}$ be a triangle in $G$. By \Cref{facetsK5minorfree}, the inequality $x_e+x_f+x_g \leq 2$ is facet-defining for $\CutP(G)$. For each $S \subseteq V \setminus \{v_1,v_2,v_3\}$ and $i \in \{1,2,3\}$, the indicator vector of the cut $\delta(S \cup \{v_i\})$ is a vertex of this facet. This gives $3 \cdot 2^{n-3}$ vertices on this facet. As $n\geq 5$, we have $3 \cdot 2^{n-3} >\frac{n(n-1)}{2} \geq |E|$. Hence, $\CutP(G)$ is not simplicial.
\end{proof}

Using the previous proposition, we are able to give a characterization of all graphs whose cut polytopes are simplicial (see also \Cref{tabularsimplicial}).

\begin{thm}\label{Simplicial_characterization}
Let $G$ be a graph with no isolated nodes. Then the following are equivalent:
\begin{enumerate}[(i)]
\item $\CutP(G)$ is simplicial;
\item $G$ is one of the following graphs:
	$$K_2,\ K_2 \dotcup K_2,\ K_2 \oplus_1 K_2,\ K_3,\ K_4 \text{ or } C_4. $$
\end{enumerate}
\end{thm}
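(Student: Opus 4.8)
**The plan is to prove the theorem by combining the already-established non-simpliciality for $n\geq 5$ with a small finite case analysis for $n\leq 4$.** The direction $(ii)\Rightarrow(i)$ amounts to checking that each of the six listed graphs has a simplicial cut polytope; the direction $(i)\Rightarrow(ii)$ requires showing these are the \emph{only} such graphs. By \Cref{simplicial_5vertices}, any graph with $n\geq 5$ non-isolated nodes fails to be simplicial, so it suffices to restrict attention to graphs on at most four non-isolated nodes. Thus the core of the argument is to enumerate all such graphs (up to isomorphism) and decide simpliciality for each, which is exactly the content of \Cref{tabularsimplicial}.

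For the finite enumeration, I would first reduce via the product and clique-sum structure. Since $\CutP(G)$ for a disconnected graph is a product of the cut polytopes of its components (Equation~\eqref{CutPolytopeDisjoint}), and a product is simplicial only when it is a point times a simplex or a product of two segments, the only disconnected candidate is $K_2\dotcup K_2$ (whose cut polytope is a square $=$ product of two segments, hence simplicial, being $2$-dimensional). Similarly, for graphs that are not $2$-connected, \Cref{facetsksum} gives $\CutP(G_1\oplus_1 G_2)=\CutP(G_1)\times\CutP(G_2)$, so the same product criterion applies; this handles $K_2\oplus_1 K_2$ (a path on three nodes, whose cut polytope is again a square). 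For the remaining connected graphs I would invoke the dimension formula $\dim\CutP(G)=|E(G)|$ together with the vertex count $2^{|V|-1}$, and the facet descriptions of \Cref{facetsK5minorfree}: a polytope is simplicial precisely when every facet has exactly $\dim(P)$ vertices, so I would compare, for each candidate facet, its number of incident vertices against $|E(G)|$.

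Concretely, the positive cases are $K_2$ (a segment), $K_3$ (a triangle, i.e.\ a $2$-simplex), $C_4$ (whose cut polytope is known to be the $4$-dimensional cross-polytope, which is simplicial), and $K_4$ (whose cut polytope has eight vertices in dimension six and is simplicial — each facet is a metric-inequality facet supported on a triangle or a ``triangle-type'' inequality, carrying exactly six vertices). The negative cases are the path $P_4$ and the star $K_{1,3}$ (both triangle-free, where the facet $x_e\geq 0$ carries too many vertices, mirroring case (a) of \Cref{simplicial_5vertices}), and the ``paw'' ($K_3$ plus a pendant edge) and the ``diamond'' ($K_4$ minus an edge), where a triangle's metric-facet $x_e+x_f+x_g\leq 2$ carries more than $\dim$ vertices, mirroring case (b). I would present these conclusions compactly in the format of \Cref{tabularsimplicial}, deriving the \cmark/\xmark\ entries by the vertex-count-versus-$|E|$ comparison rather than by explicit coordinate computation.

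\textbf{The main obstacle} is the verification that $K_4$ is simplicial, since this is the largest nontrivial positive case and does not follow from any product or triangle-free shortcut. Here I expect to argue directly: $\CutP(K_4)$ has $2^3=8$ vertices in dimension $|E(K_4)|=6$, and by \Cref{facetsK5minorfree} its facets are exactly the metric-inequality facets attached to the four triangles of $K_4$ (and their switchings). I would confirm that each such facet contains exactly six vertices — matching the dimension — so that every facet is a simplex. The slight delicacy is ensuring the facet count and incidence structure are handled correctly under the switching symmetry, but since all facets lie in a single switching orbit up to the triangle symmetry, checking one representative facet suffices and the rest follow by the Switching Lemma.
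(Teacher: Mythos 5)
Your proposal is correct and follows the same skeleton as the paper's proof: reduce to at most four non-isolated nodes via \Cref{simplicial_5vertices}, dispose of disconnected graphs and $1$-sums by the product criterion, rule out the four negative cases ($P_4$, $K_{1,3}$, paw, diamond) by re-running cases (a) and (b) of that proposition's proof with the concrete edge counts, and verify the six positive cases directly. The one genuine difference is the treatment of the positive cases: the paper simply asserts that $\CutP(K_3)$ is a $3$-simplex, that $\CutP(K_4)$ is affinely isomorphic to the $6$-dimensional cyclic polytope on $8$ vertices, and that $\CutP(C_4)$ is a cross-polytope, leaving the verification to the reader, whereas you check $K_4$ intrinsically: by \Cref{facetsK5minorfree} every facet of $\CutP(K_4)$ is a triangle inequality, a representative facet $x_e+x_f+x_g\leq 2$ contains exactly $6=\dim\CutP(K_4)$ vertices, and the Switching Lemma together with the symmetry of $K_4$ carries this to all $16$ facets. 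Your route is more self-contained and checkable from results stated in the paper; the paper's is shorter but leans on unproved identifications. One slip to fix: $\CutP(K_3)$ is not a triangle (a $2$-simplex) but the $3$-simplex with vertices $(0,0,0)$, $(1,1,0)$, $(1,0,1)$, $(0,1,1)$, of dimension $|E(K_3)|=3$; this is harmless for your argument, since any simplex is simplicial, but the dimension count should be stated correctly.
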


\begin{proof}
Note that $\CutP(K_2)$ is $1$-dimensional, hence simplicial. By \Cref{CutPolytopeDisjoint} and \Cref{CutPolytope1Sum}, this yields that $\CutP(K_2 \dotcup K_2)$ and $\CutP(K_2 \oplus_1 K_2)$ are simplicial.
It is straight-forward to verify that $\CutP(K_3)$ is a 3-simplex, $\CutP(K_4)$ is affine isomorphic to the $6$-dimensional cyclic polytope on $8$ vertices, and $\CutP(C_4)$ is affinely isomorphic to a cross-polytope, all of which are simplicial polytopes.

By \Cref{simplicial_5vertices}, $\CutP(G)$ is not simplicial if $G$ contains more than $4$ nodes. Thus, it is only left to show that the remaining graphs in \Cref{tabularsimplicial} are not simplicial. It follows from cases (a) and (b) of the proof of \Cref{simplicial_5vertices} that the graphs labeled with (a) and (b), respectively, are not simplicial. 
\end{proof}

\section{Conclusion}
We have determined the linear description of cut polytopes of $K_{3,3}$-minor-free graphs and classified all graphs with a simple or simplicial cut polytope.

Throughout this paper one can see that besides graph minors, the decomposition of graphs into clique-sums of specific graphs is a useful tool to understand cut polytopes. This motivates several questions discussed in the following.

In \cite{cutsandcontainment}, it was shown that for each single-crossing graph $H$, \MC can be solved in polynomial time on the class of $H$-minor-free graphs. For $H=K_5$ and $H=K_{3,3}$ the linear description of cut polytopes of $H$-minor-free graphs is now known. This naturally leads to the following question:
\begin{question}
Can one give the linear description of cut polytopes of $H$-minor-free graphs, for single-crossing graphs $H \neq K_5,K_{3,3}$?
\end{question}

By \Cref{facetsksum} for $k\leq 3$ the linear description of a strict $k$-sum of two graphs is given by taking all facet-defining inequalities of both graphs and identifying common variables. This can be traced back to the fact that in these cases $\CutP(K_k)$ is a simplex. Although this does not hold for $k \geq 4$, the cut polytope of $K_4$ is a cyclic polytope and as such well understood. Therefore, the following question arises:
\begin{question}
Can one give a linear description of the $4$-sum of two graphs in terms of their linear descriptions?
\end{question}

While we give a linear description of cut polytopes of $K_{3,3}$-minor-free graphs in \Cref{K33-minor-free Graphs}, there are further graphs that fall under the same facet regime (interestingly, even $K_{3,3}$ itself). We thus ask:

\begin{question}
Can one characterize all graphs whose cut polytopes are described by the inequalities from \Cref{K33-minor-free Graphs}?
\end{question}

\newpage

\bibliography{bibliography}
\bibliographystyle{alpha}
\end{document}